\newcommand{\commentout}[1]{}
\newcommand{\R}{\mathbb{R}}
\newcommand{\N}{\mathbb{N}}
\newcommand{\1}{{\mathchoice {\rm 1\mskip-4mu l} {\rm 1\mskip-4mu l}
{\rm 1\mskip-4.5mu l} {\rm 1\mskip-5mu l}}}
\newcommand{\ep}{\epsilon}
\newcommand {\al} {\alpha}
\newcommand {\e}  {\epsilon}
\newcommand {\eps}  {\epsilon}
\newcommand {\Chi} {{\bf \raise 2pt \hbox{$\chi$}} }
\renewcommand {\r} { {\mathbf r} }
\newcommand {\n} { {\mathbf n} }
\newcommand {\f}   {\frac}
\newcommand {\p}   {\partial}
\newcommand{\dis}{\displaystyle}
\newcommand{\beq}{\begin{equation}}
\newcommand{\beqa} {\begin{array}{rl}}
\newcommand{\eeq}{\end{equation}}
\newcommand{\eeqa}{\end{array}}
\newtheorem{theorem}{Theorem}
\newtheorem{lemma}[theorem]{Lemma}
\newtheorem{definition}[theorem]{Definition}
\newtheorem{remark}[theorem]{Remark}
\newtheorem{proposition}[theorem]{Proposition}
\newtheorem{corollary}[theorem]{Corollary}
\title{\Large \bf Subdiffusive fractional limit of a jump-renewal equation}
\author{
Hugues Berry\thanks{AIstroSight, Inria, Hospices Civils de Lyon, Université Claude Bernard Lyon 1, 56 Bld Niels Bohr, Villeurbanne, 69603, France; email: hugues.berry@inria.fr}
\and
Pierre Gabriel\thanks{Université de Tours, Université d’Orléans, CNRS, IDP, UMR 7013, Tours, France; email: pierre.gabriel@univ-tours.fr}
\and
Thomas Lepoutre\thanks{Inria, CNRS, Centrale Lyon, INSA Lyon, Universite Claude Bernard Lyon 1, Université Jean Monnet, I CJ UMR5208, 69603
Villeurbanne, France; email: thomas.lepoutre@inria.fr}
\and
Nathan Quiblier\thanks{Université Paris Cité,
Institut Imagine, INSERM UMR 1163, Paris, France; email: nathan.quiblier@institutimagine.org}
}
\date{}
\begin{document}
\maketitle
\pagestyle{plain}
\pagenumbering{arabic}

\begin{abstract}
In this paper, we consider an age-structured jump model that arises as a description of continuous time random walks with infinite mean waiting time between jumps.
We prove that under a suitable rescaling, this equation converges in the long time large scale limit to a time fractional subdiffusion equation.
\end{abstract}

\medskip

\noindent{\it 2020 Mathematics Subject Classification:} 35R11, 45K05, 60K50

\noindent{\it Key words:} age-structured PDE; renewal equation; anomalous diffusion; Caputo fractional derivative

\section{Introduction}
A distinctive feature of Brownian motion is that the variance of the travelled distance, or mean squared displacement (MSD), scales linearly with time. However, accumulated experimental recordings of the motion of biomolecules in living cells evidence random motions where the MSD grows sublinearly, as a powerlaw in time, say as $t^\al$ with $0<\al<1$.
Such a nonlinear growth of the MSD is usually referred to as anomalous diffusion, and more precisely as subdiffusion in the case of a sublinear growth. Subdiffusion has been experimentally reported in various biological systems, from actin networks \cite{amblard_subdiffusion_1996} to active intracellular transport \cite{caspi_enhanced_2000}, or in single-molecule tracking experiments of transcription factors in the nucleus \cite{izeddin_single-molecule_2014}. Applications of subdiffusion are wide, ranging from the modelling of protein diffusion in crowded biological media \cite{Saxton2007}, medical and drug delivery systems \cite{Metzler2020}, viscoelastic behaviors in the nucleus \cite{Lee2021}, subdiffusion-controlled chemical reactions \cite{Haugh2009}, or more generally, diffusion in porous media (see e.g., \cite{Graczyk2023}).

A very abundant literature has proposed mathematical models able to account for subdiffusion in biology. We refer firstly to several textbooks (and the references therein), namely, two introductory books \cite{evangelista2018fractional,evangelista2023introduction} or a more cell-oriented modelling point of view in \cite[chap. 6-7]{bressloff2022stochastic}. A variety of points of view has been expressed regarding what models may be at the origin of the experimental records showing anomalous diffusion \cite[part III]{bressloff2013stochastic}; the interested reader will find a collection of applicative examples in \cite{hofling2013,suzuki2023fractional,wang2022anomalous}. 
Historically, the first tool proposed to model mathematically such behaviors has been the so called continuous time random walk (CTRW)~\cite{MontrollWeiss1965},
a generalization of random walk where the duration between two random successive jumps, during which the walker maintains his position, is modelled as a random variable with associated distribution.
Assuming separability, such a process is therefore characterized by two probability distributions, one for the jumps in the position space, typically a kernel $w$ on $\R^d$,
and a second one for the waiting or residence time, that we will call $\phi$, on the time half line $[0,\infty)$.

\medskip

We denote by $r(t,a,x)$ the density of particles, molecules, or individuals, which, at time $t$, have been resident at position $x$  for the last $a$ time units. This density distribution $r$ satisfies the following nonlocal partial differential equation
\begin{equation}\label{eq:jump-renewal}
\left\{\begin{array}{l}
\dis\f{\p}{\p t} r(t,a,x)+\f{\p}{\p a} r(t,a,x)+\beta(a)r(t,a,x)=0,\qquad\qquad t,a>0,\quad x\in\R^d,
\vspace{2mm}\\
\dis \r(t,x):=r(t,0,x)=\int_0^\infty\!\!\!\int_{\R^d}w(x-y)\beta(a)r(t,a,y)\,dyda,
\vspace{2mm}\\
r(0,a,x)=r^0(a,x),
\end{array}\right.
\end{equation}
where $\beta$ is a jump rate, or escape rate.

In opposition to age-based models for epidemics, for instance, that usually employ age-dependent death rates and new individual births with age 0, the particles in our model do not die or birth, they move through jumps in space with an age-dependent jump rate $\beta(a)$. The age variable $a$ does not represent a biological age, but the time spent since the last jump of the particle, so that any jump resets the particle age to zero, thus the r.h.s. of the boundary term $\r(t,x)$. Therefore, in this setting, a jump at age $a$ corresponds to a loss of individuals of age $a$ and to a (balanced) gain of individual of age $0$, which is basically what eq.\eqref{eq:jump-renewal} expresses.

Upon arriving at a new position at time $0$, the probability of not jumping before time $t$, sometimes called the {\it survival function}, is given by
\[\psi(t)=e^{-\int_0^t\beta(s)ds}.\]
The density probability function of the times of jumps is then given by
\[\phi(t)=\beta(t)\psi(t)=\beta(t)\,e^{-\int_0^t\beta(s)ds}=-\psi'(t).\]

This model falls into the class of age-structured models, since the residence time $a$ behaves like an ``age'' for the particles, which is by definition reset to zero when a jump occurs.
A similar equation appears in the context of kinetic models in domains with holes in~\cite{Bernard2010}.
For more on age-structured modelling, we refer to the classical textbook \cite{perthame2007transport}.

Equation~\eqref{eq:jump-renewal} was first proposed by~\cite{Vlad1984} as an alternative to the generalized master equation (GME) which is often used to describe CTRW, see for instance, among many others, \cite{Gorenflo2008,Metzler2000,Scalas2004}.
As noticed in~\cite{Vlad1987}, unlike the GME which does not incorporate age variable, Equation~\eqref{eq:jump-renewal} corresponds to a Markovian description of the random walk.
At the mesoscopic level it means that Equation~\eqref{eq:jump-renewal} enjoys a semigroup property.
This makes it more relevant than GME in particular for incorporating reaction terms, see~\cite{Vlad2002,Yadav2006,Mendez2010}.

\medskip

We suppose that $w$ is a probability measure on $\R^d$ with finite second moment and zero first moment, namely
\begin{equation}\label{as:w}
w\geq0,\qquad\int_{\R^d}w(dx)=1,\qquad\int_{\R^d}xw(dx)=0,\qquad \sigma^2:=\int_{\R^d}|x|^2w(dx)<\infty.
\end{equation}
The mean waiting time of a trapped particle before escaping is given by
\[T:=\int_0^\infty t\phi(t)dt=\int_0^\infty\psi(t)dt.\]
When this quantity is infinite, the underlying random process is expected to exhibit a subdiffusive behaviour, and diffusive when it is finite (recall that we are in the case $\sigma^2<\infty$), see for instance~\cite{Metzler2000}.
Here we are interested in the subdiffusive case and we will then assume that $T=+\infty$.
More precisely we suppose that $\beta$ is a bounded continuous function on $[0,\infty)$ such that
\begin{equation}\label{as:beta}
    \psi(t)=\Psi t^{-\al} + O(t^{-\al-\delta})\qquad\text{as}\quad t\to+\infty
\end{equation}
for some constants $\Psi>0$, $\al\in(0,1)$ and $\delta\in(0,1-\al)$.
A prototypical example is provided by the jump rate functions $\beta(a)=\al/(K+a)$, with $K>0$,
for which $\psi(t)=1/(1+t/K)^\al$.

\medskip

With the above coefficients, we have the following results about the behavior of the solutions to Equation~\eqref{eq:jump-renewal},
and more precisely about the behavior of the marginal
\[\rho(t,x):=\int_0^\infty r(t,a,x)\,da.\]

\begin{proposition}\label{prop:jump-renewal}
Under Assumptions~\eqref{as:w} and~\eqref{as:beta}, the solutions to Equation~\eqref{eq:jump-renewal} satisfy
\begin{equation}\label{eq:rhomom0}
\int_{\R^d}\rho(t,x)\,dx=\int_{\R^d}\rho^0(x)\,dx,\qquad \forall t\geq0,
\end{equation}
\begin{equation}\label{eq:rhomom1}
\int_{\R^d}x\rho(t,x)\,dx=\int_{\R^d}x\rho^0(x)\,dx,\qquad \forall t\geq0,
\end{equation}

\smallskip

and, if $\int_{\R^d}|x|^2\rho^0(x)\,dx<\infty$,
\begin{equation}\label{eq:rhomom2}
\int_{\R^d}|x|^2\rho(t,x)\,dx\sim\frac{\sin(\pi\al)}{\pi\al}\frac{\sigma^2}{\Psi}\bigg(\int_{\R^d}\rho^0(x)dx\bigg) t^\al,\qquad \text{as}\ t\to\infty.
\end{equation}
\end{proposition}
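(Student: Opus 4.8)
The plan is to reduce everything to moment computations in which the spatial convolution structure collapses, and then to a scalar renewal equation whose large-time behaviour is extracted by a Laplace--Tauberian argument.

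\textbf{Step 1 (moment identities).} First I would integrate the transport equation in the age variable. Writing the local escape flux $J(t,x):=\int_0^\infty\beta(a)r(t,a,x)\,da$ and using the boundary condition $r(t,0,x)=(w*J)(t,x)$ together with the decay $r(t,\infty,x)=0$, the marginal obeys the nonlocal transport law $\p_t\rho = w*J - J$. Testing this against $1$, $x$ and $|x|^2$ and applying Fubini, the zeroth and first moments of $w*J-J$ vanish because $w$ is a probability measure with zero mean, namely $\int(w*J-J)\,dx=0$ and $\int x(w*J-J)\,dx=0$, which gives \eqref{eq:rhomom0} and \eqref{eq:rhomom1}. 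For the second moment, the change of variables $z=x-y$ with $|z+y|^2=|z|^2+2z\cdot y+|y|^2$ isolates the variance of $w$ and yields
\[
\f{d}{dt}\int_{\R^d}|x|^2\rho(t,x)\,dx = \sigma^2\int_{\R^d}J(t,x)\,dx =: \sigma^2\,\mathcal{J}(t).
\]
Hence $M_2(t):=\int_{\R^d}|x|^2\rho\,dx$ satisfies $M_2(t)=M_2(0)+\sigma^2\int_0^t\mathcal{J}(s)\,ds$, and since $M_2(0)<\infty$ the whole problem reduces to the large-time growth of the integrated total escape rate $\mathcal{J}$.

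\textbf{Step 2 (reduction to a scalar renewal equation).} Next I would integrate the equation in $x$. Because $w$ is a probability measure, the age profile $n(t,a):=\int_{\R^d}r(t,a,x)\,dx$ solves the classical renewal (McKendrick) equation $\p_t n+\p_a n+\beta n=0$ with boundary rate $n(t,0)=\int_0^\infty\beta(a)n(t,a)\,da=\mathcal{J}(t)$. Solving by characteristics, i.e. $n(t,a)=\mathcal{J}(t-a)\psi(a)$ for $a<t$ and $n(t,a)=n^0(a-t)\psi(a)/\psi(a-t)$ for $a>t$, and substituting into the boundary condition produces the renewal equation
\[
\mathcal{J}=\phi*\mathcal{J}+S,\qquad S(t):=\int_0^\infty\beta(b+t)\,\f{\psi(b+t)}{\psi(b)}\,n^0(b)\,db,
\]
where $\phi=\beta\psi$ is the waiting-time density. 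A direct Fubini computation, using $\int_b^\infty\phi=\psi(b)$, shows that $\int_0^\infty S(t)\,dt=\int_0^\infty n^0(b)\,db=\int_{\R^d}\rho^0(x)\,dx=:m_0$.

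\textbf{Step 3 (Laplace transform and Tauberian theorem).} Finally I would pass to Laplace transforms. The renewal equation gives $\widehat{\mathcal{J}}=\widehat{S}/(1-\widehat\phi)$, and the elementary identity $\widehat\phi(\lambda)=1-\lambda\widehat\psi(\lambda)$ (from $\phi=-\psi'$ and $\psi(0)=1$) turns this into $\widehat{\mathcal{J}}(\lambda)=\widehat{S}(\lambda)/(\lambda\widehat\psi(\lambda))$. The tail assumption \eqref{as:beta} yields, by an Abelian theorem, $\widehat\psi(\lambda)\sim\Psi\,\Gamma(1-\al)\,\lambda^{\al-1}$ as $\lambda\to0^+$, the correction from the $O(t^{-\al-\delta})$ term being of the strictly smaller order $\lambda^{\al+\delta-1}$ (here $\delta<1-\al$ keeps the relevant Gamma integral convergent). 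Combined with $\widehat S(0)=m_0$ from Step 2, this gives $\widehat{\mathcal{J}}(\lambda)\sim \frac{m_0}{\Psi\Gamma(1-\al)}\lambda^{-\al}$. Since $\mathcal{J}\ge0$, the primitive $G(t)=\int_0^t\mathcal{J}$ is nondecreasing with Laplace--Stieltjes transform $\widehat{\mathcal{J}}$, so Karamata's Tauberian theorem applies and gives $G(t)\sim \frac{m_0}{\Psi\,\Gamma(1-\al)\Gamma(1+\al)}\,t^\al$. The reflection formula $\Gamma(1-\al)\Gamma(1+\al)=\pi\al/\sin(\pi\al)$ then turns $M_2(t)=M_2(0)+\sigma^2 G(t)$ into exactly \eqref{eq:rhomom2}.

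\textbf{Main obstacle.} The moment identities of Step 1 are routine once the integrability justifying the interchanges (and the flux decay $r(t,\infty,x)=0$) is in place. The crux is Step 3: one must verify the Abelian asymptotics of $\widehat\psi$ with the error term genuinely dominated thanks to $\delta>0$, confirm $\widehat S(0)=m_0$ (which amounts to checking that every particle present initially eventually makes a first jump), and check the monotonicity hypothesis legitimizing the Tauberian passage. The identity $1-\widehat\phi=\lambda\widehat\psi$ is precisely what ties the subdiffusive exponent $\al$ of the waiting-time tail to the $t^\al$ growth of the second moment.
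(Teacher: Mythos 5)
Your proposal is correct, and Steps 1--2 coincide with the paper's argument: the moment identities come from the same integration/change-of-variables computation (the paper's identity $\frac{d}{dt}\int\int|x|^2r\,dx\,da=\sigma^2\n(t)$ is exactly your $M_2'=\sigma^2\mathcal J$, with $\mathcal J=\n$), and your renewal equation $\mathcal J=\phi*\mathcal J+S$ is precisely the paper's equation~\eqref{eq:N1}. The difference is in the asymptotic step. You extract the growth of $G=Y_1*\n$ by passing to Laplace transforms, using $1-\widehat\phi=\lambda\widehat\psi$, the Abelian estimate $\widehat\psi(\lambda)\sim\Psi\Gamma(1-\al)\lambda^{\al-1}$ (with error $O(\lambda^{\al+\delta-1})$, valid since $\delta<1-\al$), $\widehat S(0)=\int n^0$, and Karamata's Tauberian theorem (legitimate here because $G$ is nondecreasing). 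The paper instead stays entirely in the physical domain: Lemma~\ref{lem:renewal} shows $\n*\psi(t)\to\int n^0$ by an explicit computation, and Corollary~\ref{cor:convol} upgrades this to $\n*Y_\mu(t)\sim\frac{\int n^0}{\Psi\Gamma(1-\al)}Y_{\mu+\al}(t)$ by expanding $Y_\nu*\psi$ and splitting the convolution integral; Proposition~\ref{prop:jump-renewal} is then the case $\mu=1$. Your route is shorter and rests on classical off-the-shelf renewal theory, but it is purely qualitative: Karamata's theorem gives the equivalence with no rate. The paper's transform-free route is deliberate (the authors emphasize, in contrast with~\cite{Perthame2025}, that their method avoids Laplace-transform arguments) and, more importantly, it quantifies the convergence (Remarks~\ref{rk:lem:renewal} and~\ref{rk:cor:convol}); these explicit rates are what later produce the $O(\e^2)+O(\e^{2\delta/\al})$ bounds in Lemma~\ref{lem:jump} and Corollary~\ref{cor:conv2}, which your Tauberian argument could not supply. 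So as a proof of the proposition alone your approach is perfectly adequate; as a building block for Theorem~\ref{th:main} it would have to be replaced or supplemented by quantitative estimates.
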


\medskip

The result in eq.~\eqref{eq:rhomom2} emphasizes the fact that the MSD of the underlying random walk grows sublinearly (more precisely as $t^\al$), thus showing that the long time limit of the age-structured model eq.~\eqref{eq:jump-renewal} exhibits subdiffusion.

This result also motivates performing a time-space rescaling $(t,a,x)\to (t/\e^{2/\al},a,x/\e)$.
More precisely we consider the equation
\begin{equation}\label{eq:jump-renewal-eps}
\left\{\begin{array}{l}
\dis \epsilon^{2/\alpha}\f{\p}{\p t} r_\epsilon(t,a,x)+\f{\p}{\p a} r_\epsilon(t,a,x)+\beta(a)r_\epsilon(t,a,x)=0,\qquad\qquad t,a>0,\quad x\in\R^d,
\vspace{2mm}\\
\dis \r_\e (t,x) := r_\epsilon(t,0,x)=\int_0^\infty\!\!\!\int_{\R^d}w_\e(x-y)\beta(a)r_\epsilon(t,a,y)\,dyda,
\vspace{2mm}\\
r_\e(0,a,x)=r_\e^0(a,x),
\end{array}\right.
\end{equation}
where we have set
\[w_\e(z)=\e^{-d}\, w(z/\e).\]

\medskip

For a survival function $\psi$ satisfying~\eqref{as:beta} and a jump kernel satisfying~\eqref{as:w}, it was obtained by~\cite{Yadav2006}, see also~\cite[Chap.~2.3]{Mendez2010}, through a formal derivation based on asymptotic expansion in the Laplace (in time) -- Fourier (in space) domain, that the dynamics of the marginal $\rho$ of the solutions to Equation~\eqref{eq:jump-renewal}
is prescribed in the large scale and large time regime by the time fractional heat equation
\begin{equation}\label{eq:frac-subdiff}
\partial_t^\al \rho(t,x)=D_\al\,\Delta\rho(t,x).
\end{equation}
In this equation, $D_\al>0$ is a (sub)diffusion coefficient and $\partial_t^\alpha$ stands for the Caputo fractional derivative defined for $f\in C^1([0,\infty))$ by
\[\partial_t^\alpha f(t)=\frac{1}{\Gamma(1-\al)}\int_0^t\frac{f'(s)}{(t-s)^\al}ds\]
with $\Gamma(\al)=\int_0^\infty t^{\al-1}e^{-t}dt$ the Gamma function.
In the last years, many mathematical properties of such time fractional parabolic equations have been obtained, see for instance~\cite{DongKim,Zacher2016,Zacher2013}.

\medskip

The main objective of the present paper is to derive rigorously Equation~\eqref{eq:frac-subdiff} as the limit of Equation~\eqref{eq:jump-renewal-eps} when $\e\to0$.
Prior to that, we need to give a definition of what we call a solution to Equation~\eqref{eq:frac-subdiff}.
It can be shown, see Corollary~\ref{cor:fracODE} in Section~\ref{sec:prelim}, that for $f\in C^1([0,\infty))$ and $g\in C([0,\infty))$, the equation
\[\partial_t^\al f=g\]
is equivalent to the integrated form
\[f(t)=f(0)+\frac{1}{\Gamma(\al)}\int_0^t\frac{g(s)}{(t-s)^{1-\al}}ds.\]
Noticing that the second formulation requires less regularity on $f$, this motivates our definition in a mild sense of solution to Equation~\eqref{eq:frac-subdiff}.
Concerning the regularity in space, we only require $\rho(t,\cdot)$ to be a finite positive measure.
We then consider $\mathcal M(\R^d)=(C_0(\R^d))'$ the space of finite signed measures that we endow with its weak-$*$ topology, namely
\[\rho_n\rightharpoonup\rho\quad\text{in}\ \mathcal M(\R^d)\qquad\Longleftrightarrow\qquad\forall \varphi\in C_0(\R^d),\quad \int_{\R^d}\varphi(x)\rho_n(dx)\to\int_{\R^d}\varphi(x)\rho(dx),\]
where $C_0(\R^d)$ is the set of continuous functions on $\R^d$ that tend to zero at infinity.
The positive cone of $\mathcal M(\R^d)$, which is made of the finite positive measures on $\R^d$, is denoted as $\mathcal M_+(\R^d)$.

\begin{definition}\label{def:sol}
We say that $\rho\in C([0,\infty),\mathcal M(\R^d))$ is a solution to Equation~\eqref{eq:frac-subdiff} with initial condition $\rho(0)=\rho^0\in\mathcal M(\R^d)$ if for any $\varphi\in C^2_c(\R^d)$ and all $t\geq0$
\begin{equation}\label{eq:sol}
\int_{\R^d}\varphi(x)\rho(t,dx)=\int_{\R^d}\varphi(x)\rho^0(dx)
+\frac{D_\al}{\Gamma(\al)}\int_0^t\frac{1}{(t-s)^{1-\al}}\int_{\R^d}\Delta\varphi(x)\rho(s,dx)\,ds.
\end{equation}
\end{definition}

We now state the main result of the paper.

\begin{theorem}\label{th:main}
Suppose that Assumptions~\eqref{as:w} and~\eqref{as:beta} hold, that
\begin{equation}\label{as:r0}
r_\e^0\geq0\qquad\text{and}\qquad \int_{\R^d}\int_0^\infty r_\e^0(a,x)(1+a)^\al dadx\leq M
\end{equation}
for some $M>0$ and all $\e\in(0,1)$, and that
\begin{equation}\label{eq:conv-initial}
\int_{\R^d}\int_0^\infty r_\e^0(a,x)\varphi(x)\,dadx\to\int_{\R^d}\varphi(x)\rho^0(dx)\qquad\text{as}\ \e\to0
\end{equation}
for some $\rho^0\in\mathcal M_+(\R^d)$ and all $\varphi\in C_0(\R^d)$.

\smallskip

Then there exists a sequence ${(\e_n)}_{n\geq1}$ decreasing towards zero and a solution $\rho\in C([0,\infty),\mathcal M(\R^d))$ to Equation~\eqref{eq:frac-subdiff} with initial condition $\rho(0)=\rho^0$, in the sense of Definition~\ref{def:sol}, with
\[D_\al=\frac{\sigma^2}{2d\Psi\Gamma(1-\al)}\]
such that for all $T>0$, the marginal $\rho_{\e_n}$ of the solution $r_{\e_n}$ to Equation~\eqref{eq:jump-renewal-eps} with $\e=\e_n$ satisfies
\[\rho_{\e_n}\to\rho\qquad\text{in}\ C([0,T],\mathcal M(\R^d))\ \text{as}\ n\to\infty.\]
\end{theorem}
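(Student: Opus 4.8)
The plan is to turn the rescaled kinetic equation \eqref{eq:jump-renewal-eps} into an \emph{approximate} mild formulation of the type in Definition~\ref{def:sol}, to collect enough uniform-in-$\e$ estimates to extract a limit, and then to pass to the limit in this formulation. Throughout set $\tau=\e^{2/\al}$, write $N_\e(t,x)=r_\e(t,0,x)$ for the renewal (rebirth) density, and $J_\e(t,x)=\int_0^\infty\beta(a)r_\e(t,a,x)\,da$ for the local escape flux. First I would integrate the first line of \eqref{eq:jump-renewal-eps} along its characteristics $s\mapsto(t+\tau s,\,a+s)$: for $a<t/\tau$ this gives $r_\e(t,a,x)=\psi(a)\,N_\e(t-\tau a,x)$, while for $a>t/\tau$ it gives the initial-layer term $r_\e(t,a,x)=\frac{\psi(a)}{\psi(a-t/\tau)}\,r_\e^0(a-t/\tau,x)$. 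Integrating in $a$, respectively against $\beta$, yields the renewal representations $\rho_\e=\tfrac1\tau\psi(\cdot/\tau)\ast_t N_\e+\text{(i.c.)}$ and $J_\e=\tfrac1\tau\phi(\cdot/\tau)\ast_t N_\e+\text{(i.c.)}$, where $\ast_t$ denotes convolution in time. Along the way I would show that the initial-layer contributions vanish for each fixed $t>0$, using $\psi(b+t/\tau)/\psi(b)\le1$ together with the uniform $(1+a)^\al$-moment bound in \eqref{as:r0} to dominate them; since these terms reduce to $\rho^0$ at $t=0$, this, with \eqref{eq:conv-initial}, fixes the initial condition.

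Next I would test the transport equation against $\varphi\in C^2_c(\R^d)$, integrate in $(a,x)$, and use the renewal condition after a Taylor expansion of $\varphi(y+\e\zeta)$ against $w(d\zeta)$: by \eqref{as:w} the first moment kills the $O(\e)$ term and the second moment turns the quadratic term into $\tfrac{\e^2\sigma^2}{2d}\Delta\varphi$. Dividing by $\tau$ and integrating in time gives
\[
\int_{\R^d}\varphi\,\rho_\e(t)=\int_{\R^d}\varphi\,\rho_\e(0)+\frac{\sigma^2}{2d}\,\tau^{\al-1}\!\int_0^t\!\int_{\R^d}\Delta\varphi\,J_\e(u,\cdot)\,du+\mathcal R_\e(t),
\]
where $\mathcal R_\e$ gathers the super-quadratic Taylor remainder, controlled by $\|\varphi\|_{C^2}$, the finite second moment, and a tightness bound on $w$. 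The heart of the argument is then to identify $\tau^{\al-1}\int_0^t J_\e\,du$, weakly, with the fractional integral of $\rho_\e$. Eliminating $N_\e$ between the two renewal representations and invoking the Tauberian asymptotics $\int_0^\infty e^{-ps}\psi(s)\,ds\sim\Psi\Gamma(1-\al)\,p^{\al-1}$ (with the $O(t^{-\al-\delta})$ tail in \eqref{as:beta} providing a quantitative rate), I would establish, uniformly on $[0,T]$,
\[
\tau^{\al-1}\!\int_0^t J_\e(u,\cdot)\,du=\frac{1}{\Psi\Gamma(1-\al)\Gamma(\al)}\int_0^t(t-s)^{\al-1}\rho_\e(s,\cdot)\,ds+o(1)\qquad(\e\to0).
\]
Here the two scalings fuse: the prefactor $\tfrac{\sigma^2}{2d}\tau^{\al-1}$ exactly absorbs the intrinsic $\tau^{1-\al}$ scaling of the escape flux, leaving the finite constant $D_\al=\frac{\sigma^2}{2d\Psi\Gamma(1-\al)}$. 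Combining the two displays produces the approximate mild identity $\int\varphi\rho_\e(t)=\int\varphi\rho_\e(0)+\frac{D_\al}{\Gamma(\al)}\int_0^t(t-s)^{\al-1}\int\Delta\varphi\,\rho_\e(s)\,ds+o(1)$.

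For compactness I would use mass conservation $\int_{\R^d}\rho_\e(t)=\int_{\R^d}\rho_\e^0\le M$ (which holds because $\int N_\e=\int J_\e$) to bound the total mass uniformly, and propagate a second-moment estimate — the rescaled analogue of \eqref{eq:rhomom2}, which stays bounded on $[0,T]$ because the $t^\al$ growth is offset by $\e^2\tau^{-\al}=1$ — together with tightness of the initial data from \eqref{eq:conv-initial} to obtain uniform tightness in space. The memory identity moreover yields $\al$-Hölder equicontinuity in time, $\big|\int\varphi\rho_\e(t)-\int\varphi\rho_\e(s)\big|\lesssim\|\varphi\|_{C^2}\,|t^\al-s^\al|$, uniformly in $\e$, for each $\varphi$ in a countable dense subset of $C_0(\R^d)$. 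Since bounded subsets of $\mathcal M(\R^d)$ are weak-$*$ compact and metrizable ($C_0(\R^d)$ being separable), an Arzelà--Ascoli argument in $C([0,T],(\mathcal M,w^*))$ and a diagonal extraction over $T\in\N$ give a subsequence $\rho_{\e_n}\to\rho$ in $C([0,T],\mathcal M(\R^d))$ for every $T$. Passing to the limit in the approximate mild identity is then routine: the initial term converges by \eqref{eq:conv-initial}; the memory term converges since $\Delta\varphi\in C_c\subset C_0$ gives pointwise-in-$s$ convergence $\int\Delta\varphi\,\rho_{\e_n}(s)\to\int\Delta\varphi\,\rho(s)$, while the integrable singular weight $(t-s)^{\al-1}$ and the uniform mass bound justify dominated convergence; and the errors vanish. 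This shows $\rho$ solves \eqref{eq:sol} with the stated $D_\al$, and $\rho\in C([0,\infty),\mathcal M(\R^d))$ with $\rho(0)=\rho^0$ by construction.

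The main obstacle I anticipate is precisely the memory-to-fractional conversion: proving, uniformly in $t$ on $[0,T]$ and with a quantitative $o(1)$ error, that the rescaled survival- and waiting-time kernels $\tfrac1\tau\psi(\cdot/\tau)$ and $\tfrac1\tau\phi(\cdot/\tau)$ act, after elimination of $N_\e$, like the Riemann--Liouville kernel of order $\al$. This is where the subleading $O(t^{-\al-\delta})$ tail in \eqref{as:beta} and the initial layer must be shown not to contaminate the limit, and where the delicate cancellation of the two scalings into the single constant $D_\al$ takes place; by comparison the mass bound, tightness, equicontinuity, and the extraction are standard.
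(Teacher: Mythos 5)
Your proposal follows the same skeleton as the paper's proof: integration along characteristics with a jumper/initial-layer splitting, replacement of the rescaled jump operator by $\frac{\sigma^2}{2d}\Delta$ via Taylor expansion against $w$ (the paper's operators $\Delta^\e$, $\check\Delta^\e$ and~\eqref{eq:Deltaconv}), reduction to comparing the time-integrated escape flux $\e^{2-2/\al}\int_0^t J_\e$ with $\frac{2dD_\al}{\sigma^2}Y_\al*\rho_\e$, then Arzel\`a--Ascoli plus a diagonal extraction in $C([0,T],\mathcal M(\R^d))$ and passage to the limit. The genuine gap is the step you yourself flag as the main obstacle: the uniform-on-$[0,T]$ identification of $\e^{2-2/\al}\int_0^t J_\e$ with $\frac{1}{\Psi\Gamma(1-\al)}Y_\al*\rho_\e$, which you propose to "establish" by invoking the asymptotics $\int_0^\infty e^{-ps}\psi(s)\,ds\sim\Psi\Gamma(1-\al)p^{\al-1}$. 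That asymptotics (the easy, Abelian direction of~\eqref{as:beta}) concerns $\psi$ alone and cannot deliver the estimate: after eliminating $N_\e$ between the two renewal representations, every error term appears \emph{convolved with the renewal density} $N_\e$, so one must know how $N_\e$ itself behaves. Concretely, the kernel discrepancy is $1-\psi-\frac{2dD_\al}{\sigma^2}Y_\al*\psi=O(s^{-\al})+O(s^{-\delta})$ (the paper's~\eqref{eq:Ynupsi}), and with only the crude bound $\|N_\e(t,\cdot)\|_{L^1(\R^d)}\leq\|\beta\|_\infty M$ the contribution of the $O(s^{-\delta})$ piece at time $t$ is of order
\begin{equation*}
\e^2\big(\e^{-2/\al}t\big)^{1-\delta}=\e^{2(\al+\delta-1)/\al}\,t^{1-\delta},
\end{equation*}
which \emph{diverges} as $\e\to0$ since $\delta<1-\al$. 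What rescues the estimate is the decay of the renewal density, i.e.\ $\n*Y_{1-\delta}(s)\sim c\,s^{\al-\delta}$ and $\n*\psi(s)\to\int n^0$ --- a renewal-theorem-type result for the age equation with infinite mean waiting time. This is exactly the content of the paper's Lemma~\ref{lem:renewal} and Corollary~\ref{cor:convol} (with the quantified versions in Remarks~\ref{rk:lem:renewal} and~\ref{rk:cor:convol} supplying uniformity in the $\e$-dependent data via~\eqref{as:r0}), and it is entirely absent from your plan. If you insist on a Laplace-domain route you need the counterpart $\hat N_\e(p)\approx c\,p^{-\al}$ \emph{and} a way back to uniform physical-domain bounds, i.e.\ the hard Tauberian direction for $\e$-dependent, non-monotone functions --- or else a genuinely different architecture (pass to the limit in Laplace variables and identify the limit by injectivity of the Laplace transform, as in the cited work of Perthame--Tang), which is precisely what the paper's physical-domain method is designed to avoid.

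Two smaller points. Your tightness step is both unavailable and unnecessary: the theorem assumes no spatial moments on $r_\e^0$ (only~\eqref{as:r0}), so there is no second-moment bound to propagate, and~\eqref{eq:conv-initial} does not imply tightness of $\{\rho_\e^0\}$ (mass may escape to infinity); fortunately the weak-$*$ sequential compactness of bounded sets of $\mathcal M(\R^d)=(C_0(\R^d))'$, which you also invoke, is all that is needed, exactly as in the paper. Also, the equicontinuity modulus you can actually prove is $C|t-t'|^\al$ (plus the $O(\e^2)+O(\e^{2\delta/\al})$ error), not $C|t^\al-t'^{\al}|$: the memory integral with kernel $(t-s)^{\al-1}$ and a sign-changing integrand yields H\"older-$\al$ continuity only (compare the paper's bound $2(t'-t)^\al+t^\al-t'^\al$ in Lemma~\ref{lem:equicontinuity}); this slip is harmless for the Arzel\`a--Ascoli argument, but the divergence identified above is not.
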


\medskip

Let us make a few comments on this theorem.

\smallskip

Typical examples of initial distributions that verify~\eqref{eq:conv-initial} are:

- the case of well prepared initial conditions where $r_\e^0(a,x)=r^0(a,x)$ is independent of $\e$,

- the case of rescaling Equation~\eqref{eq:jump-renewal} with $r_\e(t,a,x)=\e^{-d}r(t/\e^{2/\al},a,x/\e)$ which satisfies~\eqref{eq:jump-renewal-eps}  with initial condition $r_\e^0(a,x)=\e^{-d}r^0(a,x/\e)$ and verifies the convergence~\eqref{eq:conv-initial} with $\rho^0=\delta_0$.

\smallskip

A similar result as Theorem~\ref{th:main} was obtained recently in~\cite{Perthame2025} by means of Laplace transform in time.
Contrary to their approach, our method consists in deriving estimates in the physical domain.
In particular our result does not rely on the injectivity of the Laplace transform.

\smallskip

A different scaling of Equation~\eqref{eq:jump-renewal} was performed in~\cite{Calvez2019} together with a Hopf-Cole transformation, yielding a Hamilton-Jacobi limiting equation.
The Hamiltonian encodes the subdiffusive character of the equation through its behavior at the origin, which is a powerlaw with power $2/\al$.

\smallskip

The last comment is about the solutions of Equation~\eqref{eq:frac-subdiff} in the sense of Definition~\ref{def:sol}, which satisfy the same moments estimates as for the solutions of Equation~\eqref{eq:jump-renewal}.

\begin{proposition}\label{prop:frac-subdiff}
The solutions of Equation~\eqref{eq:frac-subdiff} satisfy~\eqref{eq:rhomom0}, \eqref{eq:rhomom1}, and
\begin{equation}\label{eq:rhomom2bis}
\int_{\R^d}|x|^2\rho(t,dx)=\int_{\R^d}|x|^2\rho^0(dx)+\frac{\sin(\pi\al)}{\pi\al}\frac{\sigma^2}{\Psi}\bigg(\int_{\R^d}\rho^0(dx)\bigg) t^\al,\qquad \forall t\geq0.
\end{equation}
\end{proposition}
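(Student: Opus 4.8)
The plan is to apply the weak formulation \eqref{eq:sol} to the non-compactly-supported test functions $1$, $x_i$ and $|x|^2$ by a truncation argument. I fix a cutoff $\chi\in C^\infty_c(\R^d)$ with $\chi\equiv1$ on the unit ball and $\supp\chi\subset B_2$, set $\chi_R(x)=\chi(x/R)$, and multiply each target function by $\chi_R$ before letting $R\to\infty$. Two preliminary observations make the truncation work. First, since the solution is a weak-$*$ limit of the nonnegative densities $\rho_{\e_n}$ from Theorem~\ref{th:main} and $\mathcal M_+(\R^d)$ is weak-$*$ closed, I may take $\rho(t)\in\mathcal M_+(\R^d)$, which lets me invoke monotone convergence for the zeroth and second moments. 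Second, weak-$*$ continuity of $t\mapsto\rho(t)$ makes $t\mapsto\int\varphi\,d\rho(t)$ bounded on $[0,T]$ for each $\varphi\in C_0(\R^d)$, so the uniform boundedness principle gives $C_T:=\sup_{t\in[0,T]}\|\rho(t)\|_{TV}<\infty$, and this is what will control the corrector terms.

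For \eqref{eq:rhomom0} I take $\varphi=\chi_R$: here $\Delta\chi_R=R^{-2}(\Delta\chi)(\cdot/R)$ is $O(R^{-2})$ and supported on the annulus $\{R\le|x|\le 2R\}$, so the time-integral term is bounded by $C R^{-2}\int_0^t(t-s)^{\al-1}C_T\,ds=O(R^{-2}t^\al)\to0$, while monotone convergence sends both pointwise terms to the full masses; set $m_0:=\int\rho^0$. The main computation is \eqref{eq:rhomom2bis}, for which I test with $\varphi=|x|^2\chi_R$ and expand
\[
\Delta\bigl(|x|^2\chi_R\bigr)=2d\,\chi_R+4\,x\cdot\nabla\chi_R+|x|^2\Delta\chi_R=2d\,\chi_R+\Theta_R,
\]
where $\Theta_R$ is supported on the annulus and, since $|\nabla\chi_R|\le CR^{-1}$ and $|\Delta\chi_R|\le CR^{-2}$ there, is bounded uniformly in $R$. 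The $2d\,\chi_R$ piece produces, via \eqref{eq:rhomom0} and dominated convergence in $s$ (with integrable weight $(t-s)^{\al-1}$), the term $\frac{D_\al}{\Gamma(\al)}\,2d\,m_0\,\frac{t^\al}{\al}$; the $\Theta_R$ piece vanishes because $\rho(s)$ is finite, so its mass on $\{R\le|x|\le2R\}$ tends to $0$ for each $s$, dominated by $C\,C_T$. Monotone convergence then identifies $\int|x|^2\chi_R\,d\rho(t)\to\int|x|^2\,d\rho(t)$ and the analogous initial term, and since $\int|x|^2\,d\rho^0<\infty$ this simultaneously shows the second moment stays finite. Substituting $D_\al=\sigma^2/(2d\Psi\Gamma(1-\al))$ and applying the reflection formula $\Gamma(\al)\Gamma(1-\al)=\pi/\sin(\pi\al)$ collapses the prefactor $2dD_\al/(\al\Gamma(\al))$ to $\frac{\sin(\pi\al)}{\pi\al}\frac{\sigma^2}{\Psi}$, matching \eqref{eq:rhomom2bis}.

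Finally, \eqref{eq:rhomom1} follows once the first moment is known to be finite, which Cauchy--Schwarz delivers from the finite mass and finite second moment just obtained. Testing with $\varphi=x_i\chi_R$ gives $\Delta(x_i\chi_R)=x_i\Delta\chi_R+2\partial_i\chi_R$, supported on the annulus and bounded there by $CR^{-1}$, so the time-integral term is $O(R^{-1}t^\al)\to0$, while dominated convergence (dominating function $|x|$, now integrable against $\rho(t)$ and $\rho^0$) handles the pointwise terms and yields $\int x_i\,d\rho(t)=\int x_i\,d\rho^0$ for each $i$. The one genuinely delicate step is this $R\to\infty$ passage: I must make the annular correctors vanish and secure moment finiteness at the same time, and both hinge on the uniform bound $C_T$ together with the $R$-decay of the derivatives of $\chi_R$; everything else is bookkeeping and the Gamma-function identity.
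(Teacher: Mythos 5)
Your proof is correct and follows essentially the same route as the paper's: the paper likewise tests \eqref{eq:sol} with rescaled cutoffs $\varphi(x/n)$, $x_i\varphi(x/n)$ and $|x|^2\varphi(x/n)$, uses the same decomposition of the Laplacian into a $2d\varphi(x/n)$ part plus annular correctors that vanish in the limit, and concludes via $\int_0^t(t-s)^{\al-1}ds=t^\al/\al$ together with the reflection formula $\Gamma(\al)\Gamma(1-\al)=\pi/\sin(\pi\al)$. Your extra bookkeeping (the Banach--Steinhaus bound on $\sup_{t\le T}\|\rho(t)\|_{TV}$, positivity/monotone convergence, and Cauchy--Schwarz to secure first-moment integrability before proving \eqref{eq:rhomom1}) just makes explicit what the paper subsumes under ``dominated convergence''.
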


\smallskip

In the next section, we provide preliminary results that will be useful for proving Theorem~\ref{th:main} and we give the proofs of Propositions~\ref{prop:jump-renewal} and~\ref{prop:frac-subdiff}.
The last section is devoted to the proof of Theorem~\ref{th:main}.

\section{Preliminary results and proofs of the propositions}
\label{sec:prelim}

Integrating Equation~\eqref{eq:jump-renewal} in the space variable, we readily obtain that the quantity
\[n(t,a):=\int_{\R^d} r(t,a,x)\,dx\]
verifies the equation
\begin{equation}\label{eq:age_density}
    \begin{cases}
    \partial_t n+\partial_a n+\beta(a)n=0,
    \vspace{2mm}\\
    \dis \n(t):=n(t,0)=\int_{0}^\infty\beta(a)n(t,a)da,
    \vspace{2mm}\\
    n(0,a)=n^0(a).
    \end{cases}
\end{equation}
This equation is sometimes known as the conservative renewal equation, in the sense that the integral $\int_0^\infty n(t,a)da$ is preserved along time.
It has been widely studied in the literature.
Most of the studies are in the case where $\lim\inf_{a\to\infty}\beta(a)>0$, see for instance~\cite{Gabriel2018} and the references therein.
For the case where $\beta$ tends to zero as $\al/a$ when $a\to\infty$, which is the case we are considering here, see~\cite{Berry2016} and the references therein.

\medskip

For $f,g\in L^1_{loc}(0,\infty)$, we define the convolution $f*g\in L^1_{loc}(0,\infty)$ by
\[f*g(t)=\int_0^t f(s)g(t-s)\,ds=\int_0^t f(t-s)g(s)\,ds.\]
We recall the useful commutativity and associativity properties  $f*g=g*f$, $(f*g)*h=f*(g*h)$.

We also define, for any $\nu>0$, the function $Y_\nu\in L^1_{loc}(0,\infty)$ by
\[Y_\nu(t)=\frac{t^{\nu-1}}{\Gamma(\nu)},\]
so that for $\al\in(0,1)$ we have
\[\partial_t^\al f(t)=Y_{1-\al}*f'(t).\]
We recall a classical result about the convolution of such functions $Y_\nu$.

\begin{lemma}\label{lem:Yalpha}
For any $\mu,\nu>0$ one has
\[Y_\mu*Y_\nu=Y_{\mu+\nu}.\]
In particular, $Y_{\alpha}*Y_{1-\alpha}=Y_1=1$.
\end{lemma}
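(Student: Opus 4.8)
The plan is to compute the convolution $Y_\mu*Y_\nu$ directly from its definition and recognize the resulting integral as a Beta function. First I would write, for $t>0$,
\[
Y_\mu*Y_\nu(t)=\int_0^t\frac{s^{\mu-1}}{\Gamma(\mu)}\,\frac{(t-s)^{\nu-1}}{\Gamma(\nu)}\,ds
=\frac{1}{\Gamma(\mu)\Gamma(\nu)}\int_0^t s^{\mu-1}(t-s)^{\nu-1}\,ds,
\]
which makes sense since the integrand lies in $L^1(0,t)$: near $s=0$ it behaves like $s^{\mu-1}$ with $\mu>0$, and near $s=t$ like $(t-s)^{\nu-1}$ with $\nu>0$. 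The substitution $s=tu$ then factors out all the $t$-dependence,
\[
\int_0^t s^{\mu-1}(t-s)^{\nu-1}\,ds=t^{\mu+\nu-1}\int_0^1 u^{\mu-1}(1-u)^{\nu-1}\,du=t^{\mu+\nu-1}B(\mu,\nu),
\]
leaving the Beta integral $B(\mu,\nu)$, which is independent of $t$.

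The only nontrivial input is the Beta--Gamma identity $B(\mu,\nu)=\Gamma(\mu)\Gamma(\nu)/\Gamma(\mu+\nu)$, which I would establish rather than merely cite. Starting from the product of the two Gamma integrals and applying Fubini (the integrand is nonnegative),
\[
\Gamma(\mu)\Gamma(\nu)=\int_0^\infty\!\!\int_0^\infty e^{-x-y}x^{\mu-1}y^{\nu-1}\,dx\,dy,
\]
the change of variables $x=su$, $y=s(1-u)$ with $s>0$ and $u\in(0,1)$, whose Jacobian is $s$, separates the double integral into
\[
\Gamma(\mu)\Gamma(\nu)=\int_0^\infty e^{-s}s^{\mu+\nu-1}\,ds\int_0^1 u^{\mu-1}(1-u)^{\nu-1}\,du=\Gamma(\mu+\nu)\,B(\mu,\nu).
\]

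Combining these facts, the Gamma prefactors cancel and I obtain
\[
Y_\mu*Y_\nu(t)=\frac{1}{\Gamma(\mu)\Gamma(\nu)}\,t^{\mu+\nu-1}\,\frac{\Gamma(\mu)\Gamma(\nu)}{\Gamma(\mu+\nu)}=\frac{t^{\mu+\nu-1}}{\Gamma(\mu+\nu)}=Y_{\mu+\nu}(t),
\]
as an identity in $L^1_{loc}(0,\infty)$. The special case is then immediate by taking $\mu=\alpha$ and $\nu=1-\alpha$ and using $\Gamma(1)=1$, so that $Y_\alpha*Y_{1-\alpha}=Y_1\equiv 1$. There is no serious obstacle here beyond the Beta--Gamma computation; the one point deserving a word of care is the local integrability of the integrand when $\mu$ or $\nu$ is smaller than $1$, which is precisely why the hypotheses $\mu,\nu>0$ are needed.
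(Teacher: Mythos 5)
Your proof is correct and follows essentially the same route as the paper: reduce the convolution to the Beta integral by the scaling substitution $s=tu$, then establish the Beta--Gamma identity $B(\mu,\nu)=\Gamma(\mu)\Gamma(\nu)/\Gamma(\mu+\nu)$ via the same change of variables $(x,y)=(su,s(1-u))$ with Jacobian $s$. Your added remarks on local integrability and Fubini are fine but do not alter the argument.
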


\begin{proof}
It is based on the standard formula
\begin{equation}\label{eq:loiBeta}
\int_0^1(1-s)^{\mu-1}s^{\nu-1}ds=\frac{\Gamma(\mu)\Gamma(\nu)}{\Gamma(\mu+\nu)}.
\end{equation}
For the sake of completeness we recall the classical proof of this formula through a change of variable.
Starting from
\[\Gamma(\mu)\Gamma(\nu)=\int_0^\infty u^{\mu-1}e^{-u}du\int_0^\infty v^{\nu-1}e^{-v}dv\]
and using the change of variable $(u,v)=(sw,(1-s)w)$, the function $\varphi:(0,1)\times(0,\infty)\to(0,\infty)^2$ defined by $\varphi(s,w)=((1-s)w,sw)$ being a bijection with Jacobian determinant equal to $-w$, we get that
\begin{align*}
\Gamma(\mu)\Gamma(\nu)&=\int_0^\infty\int_0^1(1-s)^{\mu-1}w^{\mu-1}e^{-(1-s)w}(sw)^{\nu-1}e^{-sw}w\,dsdw\\
&=\int_0^\infty w^{\mu+\nu-1}e^{-w}dw\int_0^1 (1-s)^{\mu-1}s^{\nu-1}ds,
\end{align*}
and the result.
Now for $t>0$ we have by using the change of variable $s\to ts$ that 
\[\int_0^t(t-s)^{\mu-1}s^{\nu-1}ds=t^{\mu+\nu-1}\int_0^1(1-s)^{\mu-1}s^{\nu-1}ds\]
which readily gives by using~\eqref{eq:loiBeta}
\[Y_\mu*Y_\nu(t)=Y_{\mu+\nu}(t).\]
\end{proof}

\begin{corollary}\label{cor:fracODE}
For $f\in C^1([0,\infty),\R)$ and $g\in C([0,\infty),\R)$ we have
\[\bigg[\ \forall t\geq0,\quad\partial_t^\al f(t)=g(t)\ \bigg]\quad\Longleftrightarrow\quad\bigg[\ \forall t\geq0,\quad f(t)=f(0)+\frac{1}{\Gamma(\al)}\int_0^t\frac{g(s)}{(t-s)^{1-\al}}ds\ \bigg].\]
\end{corollary}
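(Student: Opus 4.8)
The plan is to translate both statements into the convolution algebra of the kernels $Y_\nu$ and then to exploit Lemma~\ref{lem:Yalpha}. By the definition recalled just before the lemma, the left-hand condition $\partial_t^\al f=g$ reads $Y_{1-\al}*f'=g$, while, since $\frac{1}{\Gamma(\al)}\int_0^t(t-s)^{\al-1}g(s)\,ds=(Y_\al*g)(t)$, the right-hand condition reads $f-f(0)=Y_\al*g$. The one elementary fact I would record at the outset is the primitive identity $(1*f')(t)=\int_0^t f'(u)\,du=f(t)-f(0)$, valid because $f\in C^1$, where $1=Y_1$ denotes the constant function. Both reformulations are equivalences holding for all $t\geq0$, which is exactly the content of the stated corollary.

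For the forward implication, assuming $Y_{1-\al}*f'=g$, I would convolve both sides by $Y_\al$ and use associativity of the convolution together with the identity $Y_\al*Y_{1-\al}=Y_1=1$ supplied by Lemma~\ref{lem:Yalpha}. This yields $Y_\al*g=Y_\al*(Y_{1-\al}*f')=(Y_\al*Y_{1-\al})*f'=1*f'=f-f(0)$, which is precisely the integrated form.

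For the converse, starting from $f-f(0)=Y_\al*g$, I would rewrite the left-hand side as $1*f'=(Y_\al*Y_{1-\al})*f'=Y_\al*(Y_{1-\al}*f')$, so that the hypothesis becomes $Y_\al*\bigl(Y_{1-\al}*f'-g\bigr)=0$. To cancel the factor $Y_\al$ I would convolve once more by $Y_{1-\al}$ and again invoke Lemma~\ref{lem:Yalpha}, obtaining $1*\bigl(Y_{1-\al}*f'-g\bigr)=0$, that is, $\int_0^t\bigl(Y_{1-\al}*f'-g\bigr)(s)\,ds=0$ for all $t\geq0$.

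The main obstacle, and the only point requiring a genuine regularity argument, is to pass from this vanishing primitive to the pointwise equality $Y_{1-\al}*f'=g$. For that I would first check that $h:=Y_{1-\al}*f'-g$ is continuous on $[0,\infty)$: the term $g$ is continuous by hypothesis, and $(Y_{1-\al}*f')(t)=\frac{1}{\Gamma(1-\al)}\int_0^t(t-s)^{-\al}f'(s)\,ds$ is continuous because $f'$ is bounded on compact sets while the kernel $s\mapsto s^{-\al}$ is locally integrable, so continuity follows by dominated convergence after the substitution $u=t-s$. Since $h$ is continuous and its primitive $1*h$ vanishes identically, differentiating in $t$ gives $h\equiv0$, which is the desired equality and completes the equivalence.
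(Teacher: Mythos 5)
Your proof is correct and follows essentially the same route as the paper's: both directions are reduced to the convolution identity $Y_\al*Y_{1-\al}=Y_1$ of Lemma~\ref{lem:Yalpha}, with the forward implication obtained by convolving with $Y_\al$ and the converse by convolving with $Y_{1-\al}$ and then differentiating. The only difference is one of bookkeeping: where the paper differentiates the identity $Y_{1-\al}*(f-f(0))=Y_1*g$ directly (a singular-kernel integral), you first move the derivative onto $f$ via $f-f(0)=Y_1*f'$, so that the final differentiation is of the primitive of the continuous function $Y_{1-\al}*f'-g$, a step you justify carefully via dominated convergence.
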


\begin{proof}
The equivalence reads
\[Y_{1-\al}*f'=g\quad\Longleftrightarrow\quad f-f(0)=Y_\al*g.\]
From the left to the right, it suffices to convolve by $Y_\al$, use Lemma~\ref{lem:Yalpha} and note that $Y_1*f'(t)=f(t)-f(0)$.
For going from the right to the left, a convolution with $Y_{1-\al}$ first yields
\[\frac{1}{\Gamma(1-\al)}\int_0^t\frac{f(t-s)-f(0)}{s^\al}ds=\int_0^t g(s)\,ds,\]
and a differentiation in $t$ then gives the result.
\end{proof}

We now give useful estimates on the convolution of the boundary condition $\n$ of Equation~\eqref{eq:age_density} with $Y_\nu$.
We start with a result on the convolution of $\n$ with $\psi$, which extends the result in~\cite[Prop.~3]{Perthame2025}.

\begin{lemma}\label{lem:renewal}
Assume that $\psi(t)\searrow 0$ as $t\to\infty$, which is equivalent to considering $\beta\not\in L^1(0,\infty)$.
Then the boundary condition $\n(t)$ of Equation~\eqref{eq:age_density} satisfies
\[\n*\psi(t)\to \int_0^\infty n^0(a)\,da\qquad\text{as}\quad t\to+\infty.\]
\end{lemma}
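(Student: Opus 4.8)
The plan is to solve the transport equation in Equation~\eqref{eq:age_density} explicitly by the method of characteristics, and then to read off the asymptotics of $\n*\psi$ directly from the conservation of mass, staying entirely in the physical domain. First I would integrate the characteristic ODE $\frac{d}{dt}n(t,a_0+t)=-\beta(a_0+t)\,n(t,a_0+t)$ along the lines $a-t=\mathrm{const}$. Distinguishing whether a characteristic emanates from the initial datum (when $a>t$) or from the boundary at time $t-a$ (when $a<t$), and using $\psi(t)=e^{-\int_0^t\beta}$, this yields the representation
\[
n(t,a)=\begin{cases}
\n(t-a)\,\psi(a), & 0<a<t,\\[1mm]
n^0(a-t)\,\dfrac{\psi(a)}{\psi(a-t)}, & a>t.
\end{cases}
\]

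Next I would integrate this representation over $a\in(0,\infty)$. The contribution of the region $a<t$ is literally $(\n*\psi)(t)$, while the region $a>t$ gives, after the change of variable $a\mapsto a-t$, a remainder
\[
R(t):=\int_0^\infty n^0(a)\,\frac{\psi(a+t)}{\psi(a)}\,da.
\]
In parallel, integrating Equation~\eqref{eq:age_density} in $a$, using the boundary condition $n(t,0)=\int_0^\infty\beta(a)n(t,a)\,da$ together with the decay $n(t,a)\to0$ as $a\to\infty$, shows that the total mass $N(t):=\int_0^\infty n(t,a)\,da$ is constant, equal to $N_0:=\int_0^\infty n^0(a)\,da$. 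Combining the two computations gives the exact identity $(\n*\psi)(t)=N_0-R(t)$, which reduces the lemma to proving $R(t)\to0$.

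Finally I would control the remainder by dominated convergence. Since $\beta\geq0$, the ratio $\psi(a+t)/\psi(a)=e^{-\int_a^{a+t}\beta}$ is bounded by $1$, so the integrand is dominated by $n^0\in L^1(0,\infty)$; and for each fixed $a$ the hypothesis $\beta\notin L^1(0,\infty)$ forces $\int_a^{a+t}\beta\to+\infty$, hence $\psi(a+t)/\psi(a)\to0$ as $t\to\infty$. Dominated convergence then gives $R(t)\to0$, and therefore $(\n*\psi)(t)\to N_0=\int_0^\infty n^0(a)\,da$, as claimed.

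The main obstacle is bookkeeping rather than analysis: one must justify the characteristic representation (existence and uniqueness of a suitably regular solution, and the vanishing of $n(t,a)$ as $a\to\infty$ that underlies mass conservation) carefully enough that the splitting of $N(t)$ into $(\n*\psi)(t)$ and $R(t)$ is rigorous. Once the exact identity $(\n*\psi)(t)=N_0-R(t)$ is established, the convergence follows immediately and does not require any Laplace transform argument.
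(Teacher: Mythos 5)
Your proof is correct, and it reaches the paper's key identity by a genuinely different mechanism. Both arguments start from the same characteristic representation and both end with the same exact identity
\[
\n*\psi(t)=\int_0^\infty n^0(a)\,da-\int_0^\infty \frac{\psi(a+t)}{\psi(a)}\,n^0(a)\,da,
\]
followed by the same dominated-convergence step (your pointwise limit via $\beta\notin L^1(0,\infty)$ is equivalent to the paper's via $\psi(t)\searrow 0$). The difference is how the identity is obtained. The paper plugs the representation into the boundary condition to get the renewal equation $\n=\phi*\n+\int_0^\infty \frac{\phi(\cdot+a)}{\psi(a)}n^0(a)\,da$, then convolves with $Y_1$ and exploits the convolution identity $Y_1*\phi=Y_1-\psi$. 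You instead integrate the representation in $a$, which splits the total mass into $\n*\psi(t)$ (particles that have jumped, i.e.\ age $<t$) plus the non-jumper remainder $R(t)$, and invoke conservation of mass. Your route is more direct and physically transparent --- it is exactly the jumpers/non-jumpers decomposition that the paper itself uses later for $\rho_\e^j$ and $\rho_\e^{nj}$, and it makes the lemma read as ``the mass of never-jumped particles vanishes.'' Its cost is that mass conservation must itself be justified: the flux term at $a=\infty$ does not vanish pointwise for merely $L^1$ data, so one should integrate over $a\in[0,A]$ and in time, and use $\int_0^t n(s,A)\,ds\leq\int_{A-t}^A|n^0(u)|\,du\to0$ as $A\to\infty$ before applying the boundary condition --- a repair of the bookkeeping gap you correctly flagged. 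The paper's route avoids this entirely, needing only the boundary condition pointwise in time and Fubini, and it produces the renewal equation as a useful intermediate object; but it requires the small extra piece of convolution algebra that your argument bypasses.
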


\begin{proof}
Using the method of characteristics, we have
\[n(t,a)=\mathbf n(t-a)\psi(a)\1_{t\geq a}+n^0(a-t)\frac{\psi(a)}{\psi(a-t)}\1_{t<a}.\]
Injecting this into the boundary condition we get
\[n(t,0)=\int_0^\infty\beta(a)n(t,a)\,da=\int_0^t\phi(a)\n(t-a)\,da+\int_t^\infty\frac{\phi(a)}{\psi(a-t)}n^0(a-t)\,da,\]
which also reads
\begin{equation}\label{eq:N1}
\n(t)=\phi*\n(t)+\int_0^\infty\frac{\phi(a+t)}{\psi(a)}n^0(a)\,da.
\end{equation}
Noticing that
\[Y_1*\phi=1-\psi=Y_1-\psi\]
we deduce from~\eqref{eq:N1} that
\begin{align*}
\psi*\n(t)=Y_1*(\n-\phi*\n)(t)&=\int_0^t\int_0^\infty\frac{\phi(a+s)}{\psi(a)}n^0(a)\,dads\\
&=\int_0^\infty \frac{n^0(a)}{\psi(a)}\int_0^t\phi(a+s)\,dsda\\
&=\int_0^\infty \frac{n^0(a)}{\psi(a)}\big[\psi(a)-\psi(a+t)\big]\,da\\
&=\int_0^\infty n^0(a)\,da - \int_0^\infty \frac{\psi(a+t)}{\psi(a)}n^0(a)\,da\to \int_0^\infty n^0(a)\,da,
\end{align*}
where we have used the dominated convergence theorem for the last convergence and where $n^0(a)$ is an integrable dominating function.
Indeed, since $\psi$ is a non-increasing function that goes to zero at infinity, we have $\psi(a+t)/\psi(a)\leq1$ and $\psi(a+t)/\psi(a)\to0$ as $t\to\infty$ for any $a\geq0$. This proves pointwise convergence.
\end{proof}

\begin{remark}\label{rk:lem:renewal}
Note that the assumption on $\psi$ in Lemma~\ref{lem:renewal} is weaker than~\eqref{as:beta}.

However, under Assumption~\eqref{as:beta} and supposing besides, in the spirit of~\eqref{as:r0}, that
\begin{equation}\label{as:n0}
\int_0^\infty |n^0(a)|(1+a)^\al da\leq M,
\end{equation}
the convergence can be quantified.
Indeed, \eqref{as:beta} ensures the existence of $C>0$ such that
\[
\frac{\psi(a+t)}{\psi(a)}\leq C \left(\frac{1+a}{1+a+t}\right)^\alpha,
\]
so that 
 \[
 \bigg|\psi*\n(t)-\int n^0\bigg|\leq \int_0^\infty \frac{\psi(a+t)}{\psi(a)}|n^0(a)|\,da \leq CM 2^\al (1+t)^{-\al}.
 \]
\end{remark}

\bigskip

\begin{corollary}\label{cor:convol}
Assume~\eqref{as:beta}.
Then for any $\mu>1-\al$ we have
\[\n*Y_{\mu}(t)\sim \frac{\int_0^\infty n^0(a)\,da}{\Psi\Gamma(1-\al)}\, Y_{\mu+\al}(t)\qquad\text{as}\ t\to+\infty.\]
\end{corollary}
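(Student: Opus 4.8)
The plan is to reduce the statement to the single ``boundary'' case $\mu=1-\al$ and then propagate it to all $\mu>1-\al$ by one convolution with a power. Set $L:=\int_0^\infty n^0(a)\,da$ and $c:=\frac{L}{\Psi\Gamma(1-\al)}$. The propagation rests on an elementary Abelian fact: if $h\in C([0,\infty))$ is bounded with $h(t)\to\ell$ as $t\to\infty$, then $h*Y_\nu(t)\sim\ell\,Y_{\nu+1}(t)$ for every $\nu>0$. This follows from the scaling $s=tu$ in $h*Y_\nu(t)=\frac1{\Gamma(\nu)}\int_0^t h(s)(t-s)^{\nu-1}\,ds$, which turns it into $\frac{t^\nu}{\Gamma(\nu)}\int_0^1 h(tu)(1-u)^{\nu-1}\,du$; letting $t\to\infty$ under dominated convergence (the bound on $h$ dominates, $h(tu)\to\ell$ for each $u\in(0,1]$) gives the integral $\ell/\nu$. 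Granting the base case $\n*Y_{1-\al}(t)\to c$, write for $\mu>1-\al$, using Lemma~\ref{lem:Yalpha}, $Y_\mu=Y_{1-\al}*Y_{\mu-1+\al}$ with $\mu-1+\al>0$, so that $\n*Y_\mu=(\n*Y_{1-\al})*Y_{\mu-1+\al}$. Since $\n*Y_{1-\al}$ is continuous, bounded and converges to $c$, the Abelian fact with $\nu=\mu-1+\al$ yields $\n*Y_\mu\sim c\,Y_{\mu+\al}$, which is the claim.

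It remains to establish the base case $\n*Y_{1-\al}(t)\to c$, and the idea is to compare $Y_{1-\al}$ with $\psi$, for which Lemma~\ref{lem:renewal} already controls the convolution with $\n$. Since $\Psi\Gamma(1-\al)Y_{1-\al}(t)=\Psi\,t^{-\al}$, assumption~\eqref{as:beta} says exactly that $R:=\psi-\Psi\Gamma(1-\al)Y_{1-\al}$ satisfies $R(t)=O(t^{-\al-\delta})$ as $t\to\infty$; moreover, since $\psi$ is bounded, $R(t)\sim-\Psi t^{-\al}$ near the origin, so $R$ has an integrable singularity there. From the decomposition $\n*\psi=\Psi\Gamma(1-\al)\,\n*Y_{1-\al}+\n*R$ together with Lemma~\ref{lem:renewal} (which gives $\n*\psi\to L$), the base case is equivalent to
\[\n*R(t)\longrightarrow 0\qquad\text{as }t\to\infty.\]

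The crux is therefore the control of $\n*R$, and this is where the behaviour of the boundary term $\n$ enters. Integrating~\eqref{eq:age_density} in age shows that $\int_0^\infty n(t,a)\,da=L$ for all $t$, whence $\n(t)=\int_0^\infty\beta(a)n(t,a)\,da\le\|\beta\|_\infty L$ is bounded; this rules out concentration but is not enough, because the tail of $R$ is not integrable (recall $\al+\delta<1$). What is needed is a polynomial decay of $\n$. I expect to prove, from the renewal formulation~\eqref{eq:N1} (iterated as $\n=\sum_{k\ge0}\phi^{*k}*h$ with $h(t)=\int_0^\infty\psi(a)^{-1}\phi(a+t)n^0(a)\,da$) and the regularly varying tail $\int_t^\infty\phi=\psi(t)\sim\Psi t^{-\al}$, a bound of the form $\n(t)=O((1+t)^{\al-1})$; in fact any bound $\n(t)=O(t^{-\gamma})$ with $\gamma>1-\al-\delta$ suffices. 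Granting such an estimate, one writes $|R(s)|\le C\,s^{-\al}(1+s)^{-\delta}$ and splits $\n*R(t)=\int_0^t\n(t-s)R(s)\,ds$ over $s\in[0,t/2]$ and $s\in[t/2,t]$: on the first region one bounds $\n(t-s)=O(t^{\al-1})$ and uses $\int_0^{t/2}s^{-\al}(1+s)^{-\delta}\,ds=O(t^{1-\al-\delta})$; on the second one bounds $R(s)=O(t^{-\al-\delta})$ and integrates the decay of $\n$, giving $\int_0^{t/2}\n(u)\,du=O(t^\al)$. Both contributions are $O(t^{-\delta})\to0$, the exponent being negative precisely because $\al+\delta<1$. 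The main obstacle is thus the a priori decay estimate on $\n$: boundedness alone fails, and one must exploit the renewal structure~\eqref{eq:N1} (and the tail of $\phi$ encoded in~\eqref{as:beta}) to obtain the required power-law decay, the quantitative rate in Remark~\ref{rk:lem:renewal} being the natural starting point for this bootstrap.
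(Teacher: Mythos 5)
Your propagation step is fine: the Abelian fact (bounded $h\to\ell$ implies $h*Y_\nu(t)\sim\ell\,Y_{\nu+1}(t)$ for $\nu>0$) is exactly the scaling argument the paper itself uses on the term $Y_\nu*\psi*\n$, and the factorization $Y_\mu=Y_{1-\al}*Y_{\mu-1+\al}$ is legitimate. The genuine gap is the point you yourself flag: the base case $\n*Y_{1-\al}(t)\to\frac{\int_0^\infty n^0}{\Psi\Gamma(1-\al)}$, equivalently $\n*R(t)\to0$ for $R:=\psi-\Psi\Gamma(1-\al)Y_{1-\al}$, is not proven; it is reduced to a pointwise decay $\n(t)=O(t^{\al-1})$ (or $O(t^{-\gamma})$ with $\gamma>1-\al-\delta$) that you only ``expect to prove''. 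That estimate does not follow from \eqref{eq:N1} and \eqref{as:beta}, and under the paper's hypotheses it is false in general. Indeed \eqref{as:beta} constrains $\psi$ but puts no pointwise constraint on $\phi=\beta\psi$, since $\beta$ is merely bounded and continuous: take $\beta(t)=\al/(1+t)+b(t)$ with $b$ a sum of continuous bumps of height $1$ and width $\simeq 2^{-k\delta}$ centered at $t_k=2^k$. Then $\int_t^\infty b=O(t^{-\delta})$, so $\psi=e^{-\int_0^t\beta}$ still satisfies \eqref{as:beta}, while $\phi$ has spikes of height $\simeq\Psi t_k^{-\al}$ and mass $\simeq t_k^{-\al-\delta}$ at the $t_k$. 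By positivity in \eqref{eq:N1}, $\n(t)\geq\int_0^\infty\psi(a)^{-1}\phi(a+t)n^0(a)\,da$, so for $n^0=\1_{0\le a\le1}$ one gets $\n(t)\gtrsim t_k^{-\al-\delta}$ on an interval near each $t_k$. Hence, in the admissible regime $2(\al+\delta)<1$, neither $\n=O(t^{\al-1})$ nor any bound $\n=O(t^{-\gamma})$ with $\gamma>1-\al-\delta$ can hold. Pointwise decay of a renewal density with infinite-mean waiting times is strong-renewal-theorem territory, and it genuinely requires regularity of $\phi$ that the paper never assumes; Remark~\ref{rk:lem:renewal} gives only averaged information and cannot be bootstrapped to a pointwise rate.

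Note also that your base case sits exactly at the critical exponent $\mu=1-\al$ that the corollary excludes, and it is strictly stronger than the corollary (it implies it through your Abelian step); this is why the route is structurally harder than the statement itself. The paper's proof avoids any decay information on $\n$, using only $0\leq\n\leq\|\beta\|_\infty\int n^0$: from \eqref{eq:Ynupsi} it writes $Y_{1+\nu-\al}=\frac{1}{\Psi\Gamma(1-\al)}\,Y_\nu*\psi-\frac{\Gamma(1+\nu-\al)}{\Psi\Gamma(1-\al)}\,\tau_\nu Y_{1+\nu-\al}$ with $\tau_\nu(t)=O(t^{-\delta})$, convolves with $\n$, handles the first term via Lemma~\ref{lem:renewal} plus scaling, and splits the error term at $s=t^\eta$: the part $s\leq t^\eta$ is $O(t^{\eta(1+\nu-\al)})=o(t^\nu)$ using boundedness of $\n$ alone, while the part $s\geq t^\eta$ is bounded by $o(1)\cdot\big(Y_{1+\nu-\al}*\n\big)(t)$, i.e.\ small relative to the unknown quantity itself, and is then absorbed into the left-hand side. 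Both steps exploit $\nu=\mu+\al-1>0$ in an essential way: the target $Y_{1+\nu}(t)\simeq t^\nu$ grows, so errors need only be $o(t^\nu)$. At your base case $\nu=0$ the target is $O(1)$, that room vanishes, and neither the paper's splitting nor your $[0,t/2]$, $[t/2,t]$ splitting closes with only boundedness of $\n$. The fix is not to repair the bootstrap but to abandon the critical exponent altogether and run the comparison between $Y_\nu*\psi$ and $Y_{1+\nu-\al}$ directly for $\nu>0$, as the paper does.
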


\begin{proof}
Since Equation~\eqref{eq:age_density} is linear, we can suppose without loss of generality that $n^0\geq0$, so that $n(t,\cdot)\geq0$ for all $t\geq0$.

We start by computing, for all $\nu>0$,
\begin{align*}
  Y_\nu*\psi(t)&=\frac{1}{\Gamma(\nu)}\int_0^t(t-s)^{\nu-1}\psi(s)\,ds\\
&=\frac{t^\nu}{\Gamma(\nu)}\int_0^1(1-s)^{\nu-1}\psi(ts)\,ds\\  
&= \frac{t^{\nu-\alpha}}{\Gamma(\nu)}\int_0^1(1-s)^{\nu-1}s^{-\alpha} (\Psi+(ts)^\alpha\psi(ts)-\Psi)\,ds.
\end{align*}
Since Assumption~\eqref{as:beta} ensures the existence of $C>0$ such that $|(ts)^\alpha\psi(ts)-\Psi|\leq C(1+ts)^{-\delta}$,
we infer, using~\eqref{eq:loiBeta} and the fact that $(1+ts)^{-\delta}\leq (1+t)^{-\delta}s^{-\delta}$ when $s\in(0,1)$, that
\begin{equation}\label{eq:Ynupsi}
Y_\nu*\psi(t)=\Psi\Gamma(1-\al)Y_{1+\nu-\al}(t)+\tau_\nu(t)t^{\nu-\alpha}
\end{equation}
with
\[|\tau_\nu(t)|\leq C\frac{\Gamma(1-\al-\delta)}{\Gamma(1+\nu-\al-\delta)}(1+t)^{-\delta}\xrightarrow[t\to\infty]{}0.\]
Then we have 
\begin{align*}
    Y_{1+\nu-\al}=\frac{1}{\Psi\Gamma(1-\al)}Y_\nu*\psi - \frac{\Gamma(1+\nu-\al)}{\Psi\Gamma(1-\al)}\, \tau_\nu Y_{1+\nu-\al}.
\end{align*}
We make the convolution with $\n$ and get 
\begin{align*}
    Y_{1+\nu-\al}*\n(t)=\frac{1}{\Psi\Gamma(1-\al)}Y_\nu*\psi*\n(t) - \frac{\Gamma(1+\nu-\al)}{\Psi\Gamma(1-\al)} (\tau_\nu Y_{1+\nu-\al})*\n(t).
\end{align*}
In the right hand side, since $\nu>0$ and $\n*\psi \rightarrow \int n^0 $ from Lemma~\ref{lem:renewal}, we have firstly
\begin{align*}
\frac{1}{\Psi\Gamma(1-\al)}Y_\nu*\psi*\n(t)&=  \frac{1}{\Psi\Gamma(1-\al)}\frac{t^\nu}{\Gamma(\nu)}\int_0^1 (1-s)^{\nu-1}(\psi*\n)(ts)\,ds \\
&\sim  \frac{1}{\Psi\Gamma(1-\al)}\frac{t^\nu}{\Gamma(\nu)}\left(\int n^0\right) \frac{1}{\nu}=\frac{1}{\Psi\Gamma(1-\al)}\left(\int n^0\right)Y_{1+\nu}(t).
\end{align*}
For the second term we write, for some $\eta\in(0,1)$ to be chosen later,
\begin{align*}
    \Gamma(1+\nu-\al)\,(\tau_\nu Y_{1+\nu-\al})*\n(t)&= \int_0^t \tau_\nu(s)s^{\nu-\al}\n(t-s)\,ds \\
    &= \int_0^{t^\eta} \tau_\nu(s)s^{\nu-\al}\n(t-s)\,ds+\int_{t^\eta}^t  \tau_\nu(s)s^{\nu-\al}\n(t-s)\,ds=I_1+I_2
\end{align*}
Since Equation~\eqref{eq:age_density} preserves the integral of the solutions along time and since $\beta$ is positive and bounded by assumption, we have
\[0\leq\n(t)\leq{\|\beta\|}_\infty \int_0^\infty n^0(a)\,da\]
and we can thus bound the first integral by 
\[
|I_1|\leq\int_0^{t^\eta} |\tau_\nu(s)s^{\nu-\al}\n(t-s)|\,ds\leq {\|\tau_\nu\|}_\infty {\|\n\|}_\infty \frac{(t^\eta)^{1+\nu-\al}}{1+\nu-\al}.
\]
Choosing  $\eta>0$ small enough so that $\eta(1+\nu-\al)<\nu $, we get that $I_1=o(t^{\nu})=o(Y_{1+\nu})$.
The second integral can be controlled by 
\begin{align*}
|I_2|&\leq \int_{t^\eta}^t  |\tau_\nu(s)s^{\nu-\al}\n(t-s)|\,ds\\
&\leq C\frac{\Gamma(1-\al-\delta)}{\Gamma(1+\nu-\al-\delta)}(1+t^\eta)^{-\delta}\int_{t^\eta}^t  s^{\nu-\al}\n(t-s)\,ds\\
 &\leq C\frac{\Gamma(1-\al-\delta)\Gamma(1+\nu-\al)}{\Gamma(1+\nu-\al-\delta)}  (1+t^\eta)^{-\delta} Y_{1+\nu-\al}*\n(t),
\end{align*}
so that $I_2=o(Y_{1+\nu-\al}*\n)$.
Combining everything together, 
\[
Y_{1+\nu-\al}*\n(t)= \frac{1}{\Psi\Gamma(1-\al)}\left(\int n^0\right)Y_{1+\nu}(t)\left(1+o(1)\right)+o(Y_{1+\nu}(t))+o(Y_{1+\nu-\al}*\n(t)),
\]
which implies the desired equivalence by taking $\nu=\mu+\al-1$.
\end{proof}

\begin{remark}\label{rk:cor:convol}
Recalling Remark~\ref{rk:lem:renewal} and looking carefully at above proof,
we can see that under Assumption~\eqref{as:n0} the result of Corollary~\ref{cor:convol} can be made more precise with the estimate
\[\n*Y_{\mu}(t)=\frac{\int_0^\infty n^0(a)\,da}{\Psi\Gamma(1-\al)}\, Y_{\mu+\al}(t)+O(t^{-\al}+t^{\eta\mu}+t^{-\eta\delta})\qquad\text{as}\ t\to\infty\]
for any choice of $\eta$ such that $0<\eta<1-\frac{1-\al}{\mu}$.
\end{remark}

Corollary~\ref{cor:convol} will be crucial in our proof of Theorem~\ref{th:main}, but it also allows us to prove Proposition~\ref{prop:jump-renewal}.

\begin{proof}[Proof of Proposition~\ref{prop:jump-renewal}]
The first two identities~\eqref{eq:rhomom0} and~\eqref{eq:rhomom1} are directly obtained by integration of Equation~\eqref{eq:jump-renewal}, by using that $\int w=1$ and $\int xw=0$ respectively.
For the equivalence of the second moment in~\eqref{eq:rhomom2}, we first show that
\begin{equation}\label{eq:rsecondmoment}
\frac{d}{dt}\int_0^\infty\!\!\!\int_{\R^d}|x|^2r(t,a,x)\,dxda=\sigma^2\,\n(t).
\end{equation}
Using again that $\int w=1$, we have by integration of Equation~\eqref{eq:jump-renewal} multiplied by $|x|^2$ that
\begin{align*}
\frac{d}{dt}\int_0^\infty\!\!\!\int_{\R^d}|x|^2r(t,a,x)\,dxda
& = \int_0^\infty\!\!\!\int_{\R^d}|x|^2\frac{\partial}{\partial t}r(t,a,x)\,dxda\\
& = -\int_0^\infty\!\!\!\int_{\R^d}|x|^2\Big(\frac{\partial}{\partial a}r(t,a,x)+\beta(a)r(t,a,x)\Big)\,dxda\\
&=\int_{\R^d}|x|^2\r(t,x)\,dx-\int_0^\infty\!\!\!\int_{\R^d}|x|^2\beta(a)r(t,a,x)\,dxda\\
&=\int_0^\infty\!\!\!\iint_{\R^{2d}}w(x-y)|x|^2\beta(a)r(t,a,y)\,dydxda\\
&\qquad\qquad -\int_0^\infty\!\!\!\iint_{\R^{2d}}w(y-x)|x|^2\beta(a)r(t,a,x)\,dydxda\\
&=\int_0^\infty\beta(a)\int_{\R^d}r(t,a,x)\bigg(\int_{\R^d}w(y-x)(|y|^2-|x|^2)\,dy\bigg)\,dxda.
\end{align*}
Using again~\eqref{as:w}, we have
\begin{align}\label{eq:wsigma}
\int_{\R^d}w(y-x)(|y|^2-|x|^2)\,dy&=\int_{\R^d}w(z)(|x+z|^2-|x|^2)\,dz\nonumber\\
&=\int_{\R^d}|z|^2w(z)\,dz+2x\cdot\int_{\R^d}zw(z)\,dz=\sigma^2
\end{align}
and thus~\eqref{eq:rsecondmoment}.
Integrating~\eqref{eq:rsecondmoment} in time we obtain
\begin{equation}\label{eq:integratedsecondmoment}
    \int_0^{\infty}\!\!\!\int_{\R^d}|x|^2r(t,a,x)\,dxda = \int_0^{\infty}\!\!\!\int_{\R^d}|x|^2r^0(a,x)\,dxda + \sigma^2\int_0^t\n(s)\,ds.
\end{equation}
Using Corollary~\ref{cor:convol} we deduce
\[\int_0^{\infty}\!\!\!\int_{\R^d}|x|^2r(t,a,x)\,dxda=\int_0^{\infty}\!\!\!\int_{\R^d}|x|^2r^0(a,x)\,dxda + \sigma^2\, Y_1*\n(t)\sim \frac{\sigma^2\int_0^\infty n^0(a)da}{\Psi\Gamma(1-\al)} Y_{1+\al}(t)
\]
from which we get~\eqref{eq:rhomom2} due to the formula
\begin{equation}\label{eq:Gamma1-al1+al}
\Gamma(1-\al)\Gamma(1+\al)=\frac{\pi\al}{\sin(\pi\al)}
\end{equation}
and because $\int_0^\infty n^0(a)da=\int_0^\infty\!\!\int_{\R^d}r^0(a,x)dadx=\int_{\R^d}\rho^0(x)dx$.

\medskip

The computations leading to Equation~\eqref{eq:rsecondmoment} are formal, since there is neither regularity in time nor limit when $a\to\infty$ for $r$ in general.
A less direct but more rigorous way to derive~\eqref{eq:integratedsecondmoment} consists in starting from the integration of Equation~\eqref{eq:jump-renewal} along the characteristics.
Considering the term $\beta(a)r(t,a,x)$ as a (negative) source term, we get
\begin{align*}
    r(t,a,x)=\bigg(&r^0(a-t,x)-\int_0^t\beta(a-t+s)r(s,a-t+s,x)\,ds\bigg)\1_{a>t}\\
    &+\bigg(\r(t-a,x)-\int_0^a\beta(a')r(t-a+a',a',x)\,da'\bigg)\,\1_{a\leq t}.
\end{align*}
Using the boundary condition we obtain, after multiplication by $|x|^2$ and integration in $x$ and $a$,
\begin{align*}
    \int_0^\infty\!\!\!\int_{\R^d}|x|^2r(t,a,x)\,dxda =& \int_t^\infty\!\!\!\int_{\R^d}|x|^2r^0(a-t,x)\,dxda \\
    & -\int_t^\infty\!\!\!\int_{\R^d}\int_0^t|x|^2\beta(a-t+s)r(s,a-t+s,x)\,dsdxda \\
    & + \int_0^t\!\int_0^\infty\!\!\!\iint_{\R^{2d}}|x|^2w(x-y)\beta(a')r(t-a,a',y)\,dydxda'da\\
    & -\int_0^t\int_{\R^d}\int_0^a|x|^2\beta(a')r(t-a+a',a',x)\,da'dxda \\
    = & \int_t^\infty\!\!\!\int_{\R^d}|x|^2r^0(a-t,x)\,dxda \\
    & -\int_0^t\!\int_t^\infty\!\!\!\int_{\R^d}|x|^2\beta(a-t+s)r(s,a-t+s,x)\,dxdads \\
    & + \int_0^\infty\!\!\!\int_0^t\!\iint_{\R^{2d}}|x|^2w(x-y)\beta(a')r(t-a,a',y)\,dydxdada'\\
    & -\int_0^t\int_{a'}^t\int_{\R^d}|x|^2\beta(a')r(t-a+a',a',x)\,dxdada'\\
    = & \int_0^\infty\!\!\!\int_{\R^d}|x|^2r^0(a,x)\,dxda \\
    & -\int_0^t\!\int_s^\infty\!\!\!\int_{\R^d}|x|^2\beta(a')r(s,a',x)\,dxda'ds \\
    & + \int_0^\infty\!\!\!\int_0^t\!\iint_{\R^{2d}}|y|^2w(y-x)\beta(a')r(s,a',x)\,dydxdsda'\\
    & -\int_0^t\int_{a'}^t\int_{\R^d}|x|^2\beta(a')r(s,a',x)\,dxdsda'
\end{align*}
where we have used the Fubini-Tonelli theorem in the second equality (remind that all the functions are non-negative) and changes of variables in the third one.
Yet another use of Fubini-Tonelli allows writing the second term as
\begin{align*}
    \int_0^t\!\int_s^\infty\!\!\!\int_{\R^d}&|x|^2\beta(a')r(s,a',x)\,dxda'ds= \\
    &\int_0^t\!\int_0^{a'}\!\int_{\R^d}|x|^2\beta(a')r(s,a',x)\,dxdsda'
+\int_t^\infty\!\!\!\int_0^t\!\int_{\R^d}|x|^2\beta(a')r(s,a',x)\,dxdsda'.
\end{align*}
All together we obtain
\begin{align*}
    \int_0^\infty\!\!\!\int_{\R^d}&|x|^2r(t,a,x)\,dxda = 
\int_0^\infty\!\!\!\int_{\R^d}|x|^2r^0(a,x)\,dxda \\
&+ \int_0^\infty\!\!\!\int_0^t\!\iint_{\R^{2d}}|y|^2w(y-x)\beta(a')r(s,a',x)\,dydxdsda'
- \int_0^\infty\!\!\!\int_0^t\!\int_{\R^d}|x|^2\beta(a')r(s,a',x)\,dxdsda'.
\end{align*}
Using that $\int w=1$ this identity can also be written as
\begin{align*}
    \int_0^\infty\!\!\!\int_{\R^d}|x|^2r(t,a,x)&\,dxda = 
\int_0^\infty\!\!\!\int_{\R^d}|x|^2r^0(a,x)\,dxda \\
&+ \int_0^\infty\!\!\!\int_0^t\!\iint_{\R^{2d}}(|y|^2-|x|^2)w(y-x)\beta(a')r(s,a',x)\,dydxdsda'.
\end{align*}
Using~\eqref{eq:wsigma} we obtain~\eqref{eq:integratedsecondmoment}.
\end{proof}

\medskip

We end this section with the proof of Proposition~\ref{prop:frac-subdiff}.

\begin{proof}[Proof of Proposition~\ref{prop:frac-subdiff}]
Let $\xi\in C^2([0,\infty),[0,\infty))$ such that $\xi(r)=1$ if $0\leq r\leq1$ and $\xi(r)=0$ if $r \geq2$,
and then define $\varphi\in C^2_c(\R^d)$ by $\varphi(x)=\xi(|x|)$

Using $\varphi_n(x)=\varphi(x/n)$ as a test function in~\eqref{eq:sol} and dominated convergence,
we get~\eqref{eq:rhomom0} by passing to the limit $n\to\infty$.

Similarly, considering the test functions $\varphi_n(x)=x_i\varphi(x/n)$, $1\leq i\leq d$, which verify
\[\Delta\varphi_n(x)=2n^{-1}\partial_i\varphi(x/n)+n^{-2}x_i\Delta\varphi(x/n),\]
we obtain~\eqref{eq:rhomom1} by passing to the limit $n\to\infty$ in~\eqref{eq:sol}, still by dominated convergence.

For~\eqref{eq:rhomom2bis} we proceed in the exact same way with the test function $\varphi_n(x)=|x|^2\varphi(x/n)$, which verifies
\[\Delta\varphi_n(x)=2d\varphi(x/n)+4n^{-1}x\cdot\nabla\varphi(x/n)+|x/n|^2\Delta\varphi(x/n).\]
Passing to the limit $n\to\infty$, again by dominated convergence, we get
\begin{align*}
\int_{\R^d}|x|^2\rho(t,dx)&=\int_{\R^d}|x|^2\rho^0(dx)+2dD_\al\bigg(\int_{\R^d}\rho^0(dx)\bigg) Y_\al*Y_1(t)\\
&=\int_{\R^d}|x|^2\rho^0(dx)+\frac{\sigma^2}{\Psi\Gamma(1-\al)}\bigg(\int_{\R^d}\rho^0(dx)\bigg) Y_{1+\al}(t)\\
&=\int_{\R^d}|x|^2\rho^0(dx)+\frac{\sin(\pi\al)}{\pi\al}\frac{\sigma^2}{\Psi}\bigg(\int_{\R^d}\rho^0(dx)\bigg) t^\al,
\end{align*}
where we have used Lemma~\ref{lem:Yalpha} in the second identity and~\eqref{eq:Gamma1-al1+al} in the last one.
\end{proof}

\section{Proof of the main theorem}

In this section, we consider an initial distribution $r^0_\e$ enjoying Assumption~\eqref{as:r0}, and
we will use the standard duality bracket notation
\[\langle h,\varphi\rangle := \int_{\R^d}h(x)\varphi(x)\,dx\]
for $h\in L^1(\R^d)$ and $\varphi\in L^\infty(\R^d)$, or $h\in\mathcal M(\R^d)$ and $\varphi\in C_b(\R^d)$.

\

In view of Definition~\ref{def:sol}, our first aim is to prove that for any $\varphi\in C^2_c(\R^d)$
\begin{equation}\label{eq:conv1}
\langle \rho_\e(t,\cdot)-\rho_\e^0,\varphi\rangle-D_\al\langle Y_\al*\rho_\e(t,\cdot),\Delta\varphi\rangle\to0\qquad\text{as}\ \e\to0
\end{equation}
locally uniformly in $t\in[0,\infty)$.
Then we will derive compactness estimates ensuring the existence of a sequence $(\rho_{\e_n})_{n\in\N}$ which converges in $C([0,T],\mathcal M)$ for all $T>0$ to a limit $\rho\in C([0,\infty),\mathcal M)$.

\

We first introduce an integral Laplace operator $\Delta^\e$ acting on the space variable as
\[\Delta^\e h(x) = \e^{-2}\int_{\R^d}w_\e(z)\big(h(x-z)-h(x)\big)\,dz.\]
Since we do not assume that $w$ is symmetric, we also define the dual operator $\check\Delta^\e$ by
\begin{equation*}
\check\Delta^\e \varphi(x) = \e^{-2}\int_{\R^d}\check w_\e(z)\big(\varphi(x-z)-\varphi(x)\big)\,dz,
\qquad\text{where}\ \check w(z):=w(-z).
\end{equation*}
We thus have
\begin{equation}\label{eq:duality}
\langle\Delta^\e h,\varphi\rangle=\langle h,\check\Delta^\e\varphi\rangle.
\end{equation}
Besides, under Assumption~\eqref{as:w}, we have for any $\varphi\in C^2_c(\R^d)$ the uniform convergence
\begin{equation}\label{eq:Deltaconv}\check\Delta^\e\varphi\to\frac{\sigma^2}{2d}\Delta\varphi\qquad\text{as}\ \e\to0.
\end{equation}
To prove~\eqref{eq:conv1} one thus has to show that
\begin{equation}\label{eq:conv2}
\langle\rho_\e-\rho_\e^0,\varphi\rangle-\frac{2d}{\sigma^2}D_\al\langle Y_\al*\rho_\e,\check\Delta^\e\varphi\rangle\to0\qquad\text{as}\ \e\to0
\end{equation}
in $C([0,T],\R)$.
We then use the following lemma.

\begin{lemma}\label{lem:rho-rho0}
We have
\begin{equation*}
\rho_\e-\rho_\e^0=\Delta^\e R_\e
\end{equation*}
where
\[R_\e(t,x)=\e^{2-2/\al}\int_0^t \!\int_0^\infty \beta(a) r_\e(s,a,x)\,dads.\]
\end{lemma}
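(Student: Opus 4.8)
The plan is to integrate the age-transport equation over all ages and then integrate once in time. Throughout, write $J_\e(t,x):=\int_0^\infty\beta(a)r_\e(t,a,x)\,da$ for the total outflux density at position $x$, so that the boundary condition in~\eqref{eq:jump-renewal-eps} reads $\r_\e(t,x)=\int_{\R^d}w_\e(x-y)J_\e(t,y)\,dy$; that is, $\r_\e=w_\e*J_\e$ in the space variable.

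First I would integrate the first line of~\eqref{eq:jump-renewal-eps} with respect to $a\in(0,\infty)$. The transport term contributes $\int_0^\infty\partial_a r_\e\,da=\lim_{a\to\infty}r_\e(t,a,x)-r_\e(t,0,x)=-\r_\e(t,x)$, where the vanishing of the boundary term at $a=\infty$ is guaranteed by the integrability propagated from~\eqref{as:r0}. The two remaining terms give $\e^{2/\al}\partial_t\rho_\e(t,x)$ and $J_\e(t,x)$, whence
\[
\e^{2/\al}\partial_t\rho_\e(t,x)=\r_\e(t,x)-J_\e(t,x).
\]
Next, I would substitute the boundary condition and change variables $y\mapsto x-z$ to rewrite the right-hand side. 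Using $\int_{\R^d}w_\e=1$ from~\eqref{as:w} to insert $J_\e(t,x)=\int_{\R^d}w_\e(z)J_\e(t,x)\,dz$, one gets
\[
\r_\e(t,x)-J_\e(t,x)=\int_{\R^d}w_\e(z)\big(J_\e(t,x-z)-J_\e(t,x)\big)\,dz=\e^2\,\Delta^\e J_\e(t,x),
\]
directly by the definition of $\Delta^\e$. This identifies $\e^{2/\al}\partial_t\rho_\e=\e^2\Delta^\e J_\e$.

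Finally, I would integrate in time over $[0,t]$. Since $\Delta^\e$ is a bounded linear operator acting only in the space variable, it commutes with the time integral, so
\[
\e^{2/\al}\big(\rho_\e(t,\cdot)-\rho_\e^0\big)=\e^2\,\Delta^\e\!\int_0^t J_\e(s,\cdot)\,ds.
\]
Dividing by $\e^{2/\al}$ and recognizing that $\e^{2-2/\al}\int_0^t J_\e(s,x)\,ds=\e^{2-2/\al}\int_0^t\!\int_0^\infty\beta(a)r_\e(s,a,x)\,da\,ds=R_\e(t,x)$ yields $\rho_\e-\rho_\e^0=\Delta^\e R_\e$, as claimed.

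I expect the only genuinely delicate point to be the rigorous justification of the manipulations rather than the algebra: namely that $r_\e(t,a,x)\to0$ as $a\to\infty$ so the boundary term drops, that Fubini applies when exchanging the $a$-integral with the spatial convolution, and that $\Delta^\e$ may be pulled out of $\int_0^t$. All of these are controlled by the moment bound~\eqref{as:r0}, which, through the representation along characteristics (as in the proof of Lemma~\ref{lem:renewal}, now with the characteristics $t-\e^{2/\al}a=\text{const}$), provides the uniform integrability in $a$ of both $r_\e$ and $\beta r_\e$ needed at each step.
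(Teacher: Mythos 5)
Your proof is correct and follows essentially the same route as the paper: integrating the equation in age (and then time), substituting the jump boundary condition with the change of variables $y\mapsto x-z$, and using $\int_{\R^d}w_\e=1$ to recognize the nonlocal operator $\Delta^\e$. The only cosmetic difference is that you integrate in age first to get the identity $\e^{2/\al}\partial_t\rho_\e=\r_\e-J_\e$ and then integrate in time, while the paper performs both integrations at once and applies $\Delta^\e$ to $r_\e(s,a,\cdot)$ inside the double integral rather than to $J_\e$.
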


\begin{proof}
Integrating~\eqref{eq:jump-renewal-eps} on $[0,t]$ in time and on $[0,\infty)$ in age and using the boundary condition we get
\begin{align}
\rho_\e(t,x)-\rho_\e^0(x)&=\e^{-2/\al}\int_0^t \r_\e(s,x)ds -\e^{-2/\al}\int_0^t\! \int_0^\infty\beta(a)r_\e(s,a,x)dads\nonumber\\
&=\e^{-2/\al}\int_0^t\!\int_0^\infty\!\!\!\int_{\R^d}\beta(a)w_\e(z) r_\e(s,a,x-z)\,dzdads -\e^{-2/\al}\int_0^t\! \int_0^\infty\beta(a)r_\e(s,a,x)dads\nonumber\\
&=\e^{2-2/\al}\int_0^t \!\int_0^\infty \beta(a)\Delta^\e r_\e(s,a,x)dads.\label{eq:rho_integ}
\end{align}
\end{proof}

Using~\eqref{eq:duality}, one is thus led, to prove~\eqref{eq:conv2}, to estimate
\[R_\e-\frac{2dD_\al}{\sigma^2}\,Y_\al*\rho_\e.\]
To do so, we separate the contributions of jumpers (that have jumped before time $t$) and non-jumpers (that have only aged).
Integrating Equation~\eqref{eq:jump-renewal-eps} along the characteristics one gets
\begin{align*}
 r_\e(t,a,x)&=r_\e(t-\e^{2/\alpha}a,0,x)e^{-\int_0^a\beta(a')\,da'}\,\1_{t\geq\e^{2/\alpha}a}\\
  &\qquad\qquad+r_\e(0,a-\e^{-2/\alpha}t,x)e^{-\int_0^{\e^{-2/\alpha}t}\beta(a'+a-\e^{-2/\alpha}t)\,da'}\,\1_{t<\e^{2/\alpha}a},
  \end{align*}
  which also reads
\begin{equation*}
r_\e(t,a,x)=\r_\e(t-\e^{2/\alpha}a,x)\psi(a)\,\1_{t\geq\e^{2/\alpha}a}+r_\e^0(a-\e^{-2/\alpha}t,x)\f{\psi(a)}{\psi(a-\e^{-2/\alpha}t)}\,\1_{t<\e^{2/\alpha}a}.
\end{equation*}
This allows us to split $\rho_\e$ as
\begin{equation}\label{eq:split_rho}
\rho_\epsilon(t,x)=\rho_\epsilon^{j}(t,x)+\rho_\epsilon^{nj}(t,x),
\end{equation}
where
\[\rho_\epsilon^{j}(t,x) = \int_0^{\epsilon^{-2/\alpha}t} r_\epsilon(t,a,x)\,da = \e^{-2/\alpha} \int_0^{t}\psi(\epsilon^{-2/\alpha}(t-s))\r_\epsilon(s,x) ds\]
is the contribution of jumpers, and
\[\rho_\epsilon^{nj}(t,x) = \int_{\epsilon^{-2/\alpha}t}^\infty r_\epsilon(t,a,x)\,da = \int_0^\infty\f{\psi(a+\epsilon^{-2/\alpha}t)}{\psi\left(a\right)}r_\e^0(a,x) da\]
the one of non-jumpers.
Similarly, one can write
\begin{equation}\label{eq:split_R}
R_\e(t,x)=R_\e^{j}(t,x) + R_\e^{nj}(t,x)
\end{equation}
with
\begin{align*}
R_\e^j(t,x) & = \e^{2-2/\al}\int_0^t \int_0^{\e^{-2/\al}s} \beta(a)r_\e(s,a,x) \,da  ds\\
& = \e^{2-4/\al}\int_0^t \int_0^{s} \phi(\e^{-2/\al}(s-s'))\r_\e(s',x) \,ds'  ds
\end{align*}
and
\begin{align*}
R_\e^{nj}(t,x) & = \e^{2-2/\al}\int_0^t \int_{\e^{-2/\al}s}^\infty \ \beta(a)r_\e(s,a,x) \,da  ds\\
& = \e^{2-2/\al}\int_0^t \int_0^\infty \frac{\phi(a+\e^{-2/\al}s)}{\psi(a)}r_\e^0(a,x) \,da ds .
\end{align*}
We will then estimate separately
\[R_\e^j-\frac{2dD_\al}{\sigma^2}\,Y_\al*\rho_\e^j\qquad\text{and}\qquad R_\e^{nj}-\frac{2dD_\al}{\sigma^2}\,Y_\al*\rho_\e^{nj}.\]
We start with the jumpers.
\begin{lemma}\label{lem:jump}
Under Assumption~\eqref{as:beta}, one has for all $T>0$
\[\sup_{0\leq t\leq T}\int_{\R^d}\Big|R_\e^j- \frac{2dD_\al}{\sigma^2} Y_\alpha*\rho_\e^j\Big|(t,x)\,dx=O(\e^2)+O(\e^{2\delta/\al})\qquad\text{as}\ \e\to0.\]
\end{lemma}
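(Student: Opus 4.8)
The plan is to write both $R_\e^j$ and $\frac{2dD_\al}{\sigma^2}Y_\al*\rho_\e^j$ as time convolutions of $\r_\e$ against explicit scalar kernels and to estimate the difference of these kernels. Recall $\frac{2dD_\al}{\sigma^2}=\frac{1}{\Psi\Gamma(1-\al)}$. Set $\psi_\e(t)=\e^{-2/\al}\psi(\e^{-2/\al}t)$ and $\phi_\e(t)=\e^{-2/\al}\phi(\e^{-2/\al}t)$. The given formula reads $\rho_\e^j=\psi_\e*\r_\e$, while integrating out the inner variable in $R_\e^j$ gives $R_\e^j=\e^{2-2/\al}\,Y_1*\phi_\e*\r_\e$; since $Y_1*\phi_\e(u)=\int_0^{\e^{-2/\al}u}\phi=1-\psi(\e^{-2/\al}u)$, this is $R_\e^j=K_\e*\r_\e$ with $K_\e(t)=\e^{2-2/\al}(1-\psi(\e^{-2/\al}t))$. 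Likewise $\frac{1}{\Psi\Gamma(1-\al)}Y_\al*\rho_\e^j=L_\e*\r_\e$ with $L_\e=\frac{1}{\Psi\Gamma(1-\al)}Y_\al*\psi_\e$.

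The key computation is the cancellation of the leading constants in $K_\e-L_\e$. By the homogeneity $Y_\al(\lambda t)=\lambda^{\al-1}Y_\al(t)$ and a change of variables, $Y_\al*\psi_\e(t)=\e^{2-2/\al}(Y_\al*\psi)(\e^{-2/\al}t)$, and expansion~\eqref{eq:Ynupsi} with $\nu=\al$ gives $(Y_\al*\psi)(\theta)=\Psi\Gamma(1-\al)+\tau_\al(\theta)$ with $|\tau_\al(\theta)|\le C(1+\theta)^{-\delta}$. Hence $L_\e(t)=\e^{2-2/\al}+\frac{\e^{2-2/\al}}{\Psi\Gamma(1-\al)}\tau_\al(\e^{-2/\al}t)$. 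On the other hand $\e^{2-2/\al}\psi(\e^{-2/\al}t)=\e^2\psi_\e(t)$, so $K_\e(t)=\e^{2-2/\al}-\e^2\psi_\e(t)$ and the $\e^{2-2/\al}$ terms cancel, leaving $K_\e-L_\e=-\e^2\psi_\e-\frac{\e^{2-2/\al}}{\Psi\Gamma(1-\al)}\tau_\al(\e^{-2/\al}\cdot)$. Convolving with $\r_\e$ yields
\[R_\e^j-\tfrac{2dD_\al}{\sigma^2}Y_\al*\rho_\e^j=-\e^2\rho_\e^j-\tfrac{\e^{2-2/\al}}{\Psi\Gamma(1-\al)}\,\tau_\al(\e^{-2/\al}\cdot)*\r_\e.\]

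For the first term, positivity of $r_\e$ gives $0\le\rho_\e^j\le\rho_\e$, and integrating~\eqref{eq:jump-renewal-eps} shows $\int_{\R^d}\rho_\e(t,x)\,dx=\int_{\R^d}\rho_\e^0\,dx\le M$ by~\eqref{as:r0}; thus $\int_{\R^d}\e^2\rho_\e^j\,dx\le\e^2 M=O(\e^2)$ uniformly in $t$. For the second term, I bound its $L^1_x$ norm by $\frac{\e^{2-2/\al}}{\Psi\Gamma(1-\al)}\int_0^t|\tau_\al(\e^{-2/\al}(t-s))|\,\n_\e(s)\,ds$, where $\n_\e(s)=\int_{\R^d}\r_\e(s,x)\,dx$; since the $x$-integrated equation is the time-rescaling by $\e^{2/\al}$ of~\eqref{eq:age_density}, one has $\n_\e(s)=\n(\e^{-2/\al}s)$ for the boundary term $\n$ of~\eqref{eq:age_density} with datum $\int r_\e^0\,dx$ (which satisfies~\eqref{as:n0} uniformly). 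Rescaling $s=\e^{2/\al}\sigma$ turns this bound into $\frac{\e^2}{\Psi\Gamma(1-\al)}(|\tau_\al|*\n)(\e^{-2/\al}t)$. Finally $|\tau_\al(v)|\le C(1+v)^{-\delta}\le C'v^{-\gamma}$ for any $\gamma\le\delta$, so for $\gamma<\al$ Corollary~\ref{cor:convol} (with $\mu=1-\gamma>1-\al$) gives $(|\tau_\al|*\n)(\theta)\lesssim(Y_{1-\gamma}*\n)(\theta)\lesssim\theta^{\al-\gamma}$, whence the term is $\lesssim\e^2(\e^{-2/\al}t)^{\al-\gamma}=\e^{2\gamma/\al}t^{\al-\gamma}=O(\e^{2\gamma/\al})$ on $[0,T]$; letting $\gamma\uparrow\delta$ when $\delta<\al$ gives the $O(\e^{2\delta/\al})$ contribution.

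The delicate point is precisely this last estimate. Bounding $\n_\e$ by its supremum retains the prefactor $\e^{2-2/\al}$ and blows up as $\e\to0$; what saves the argument is that $\int_0^\theta\n$ grows only like $\theta^\al$ (Corollary~\ref{cor:convol}), i.e. $\n_\e$ is genuinely small, and this sublinearity is exactly what compensates $\e^{2-2/\al}$. A minor additional subtlety is the regime $\delta\ge\al$: there $\gamma<\al$ only yields $O(\e^{2\gamma/\al})$ with exponent below $2$, and one upgrades to the claimed $O(\e^2)$ by using the limiting identity $(Y_{1-\al}*\n)(\theta)\to\frac{\int n^0}{\Psi\Gamma(1-\al)}$, which follows from Lemma~\ref{lem:renewal} together with~\eqref{as:beta} since $\psi\sim\Psi\Gamma(1-\al)Y_{1-\al}$.
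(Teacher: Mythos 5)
Your derivation coincides almost line by line with the paper's own proof: your kernel identity
$R_\e^j-\frac{2dD_\al}{\sigma^2}Y_\al*\rho_\e^j=-\e^2\rho_\e^j-\frac{\e^{2-2/\al}}{\Psi\Gamma(1-\al)}\,\tau_\al(\e^{-2/\al}\cdot)*\r_\e$
is exactly identity~\eqref{eq:jumpers} after inserting $1-\frac{2dD_\al}{\sigma^2}Y_\al*\psi=-\tau_\al/(\Psi\Gamma(1-\al))$, and the two terms are then estimated as in the paper. Your bound on the first term via $0\le\rho_\e^j\le\rho_\e$ and mass conservation is a mild simplification of the paper's appeal to Lemma~\ref{lem:renewal}, and your handling of the second term in the main case $\delta<\al$ is correct: there you should simply take $\gamma=\delta$ rather than ``let $\gamma\uparrow\delta$'' (each fixed $\gamma$ gives its own implied constant, so the limit is not legitimate, but it is also unnecessary), and, as you indicate, the uniformity in $\e$ of the appeal to Corollary~\ref{cor:convol} should be made precise through its quantified version (Remark~\ref{rk:cor:convol}), which applies uniformly because \eqref{as:r0} gives \eqref{as:n0} for $n_\e^0$ with the same constant $M$.

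The genuine gap is in your treatment of the regime $\delta\ge\al$ (possible when $\al<1/2$), which, to your credit, the paper's proof silently ignores: Corollary~\ref{cor:convol} needs $\mu=1-\delta>1-\al$, i.e.\ $\delta<\al$. Your patch rests on the claim $(Y_{1-\al}*\n)(\theta)\to\frac{\int n^0}{\Psi\Gamma(1-\al)}$, justified only by ``$\psi\sim\Psi\Gamma(1-\al)Y_{1-\al}$ together with Lemma~\ref{lem:renewal}''. That inference does not hold as stated: the error $h:=\Psi\Gamma(1-\al)Y_{1-\al}-\psi$ is only $O(t^{-\al-\delta})$ at infinity, and since $\al+\delta<1$ it is \emph{not} integrable; with $\n$ known only to be bounded, the resulting bound on $h*\n(\theta)$ is $O(\theta^{1-\al-\delta})$, which diverges, so asymptotic equivalence of kernels does not pass through the convolution (making it pass would require pointwise decay of $\n$, i.e.\ a true renewal theorem, which the paper deliberately avoids). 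Fortunately you do not need the limit, only boundedness, and that is one line: for $\delta\ge\al$ one has $|\tau_\al(v)|\le C(1+v)^{-\delta}\le C(1+v)^{-\al}$, while \eqref{as:beta} gives the matching lower bound $\psi(v)\ge c(1+v)^{-\al}$ (the same fact used in Remark~\ref{rk:lem:renewal}), so $|\tau_\al|\le\frac{C}{c}\,\psi$ and hence $|\tau_\al|*\n\le\frac{C}{c}\,\psi*\n\le\frac{C}{c}\int n_\e^0\le\frac{C}{c}M$, the middle inequality coming from the explicit formula for $\psi*\n$ in the proof of Lemma~\ref{lem:renewal}. This makes your second term $O(\e^2)$ uniformly on $[0,T]$ and in $\e$, which is what the stated rate requires in that regime.
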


\begin{proof}
We start by proving that
\begin{equation}\label{eq:jumpers}
\Big(R_\eps^j- \frac{2dD_\al}{\sigma^2} Y_\alpha*\rho_\eps^j\Big)(t,x)= \eps^2 \, \r_\eps(\eps^{2/\alpha}\,\cdot\,,x)*\Big(1-\psi-\frac{2dD_\al}{\sigma^2} Y_\alpha*\psi\Big)(\eps^{-2/\alpha}t).
\end{equation}
On the one hand we have
\begin{align*}
R_\epsilon^j(t,x)&= \epsilon^{2-4/\alpha}\int_0^t \int_0^{s} \phi(\eps^{-2/\alpha}(s-s')) \r_\epsilon(s',x)ds'ds
\\
&= \epsilon^{2-4/\alpha}\int_0^t \r_\epsilon(s',x)\int_{s'}^{t} \phi(\eps^{-2/\alpha}(s-s')) dsds'
\\
&= \epsilon^{2-4/\alpha}\int_0^t \r_\epsilon(s',x)\int_0^{t-s'} \phi(\eps^{-2/\alpha}(s)) dsds'
\\
&=\epsilon^{2-2/\alpha}\int_0^t \r_\epsilon(s',x)(1-\psi(\eps^{-2/\alpha}(t-s'))ds'.
\end{align*}
On the other hand we have
\begin{align*}
Y_\alpha*\rho_\eps^j(t,x) &=\frac{\eps^{-2/\al}}{\Gamma(\al)}\int_0^t (t-s)^{\al-1}\int_0^s\psi(\eps^{-2/\alpha}(s-s'))\r_\eps(s',x)\,ds'ds\\
&=\frac{\eps^{-2/\al}}{\Gamma(\al)}\int_0^t\r_\eps(s',x)\int_{s'}^t (t-s)^{\alpha-1}\psi(\eps^{-2/\alpha}(s-s'))\,dsds'\\
&=\frac{\eps^{-2/\al}}{\Gamma(\al)}\int_0^t\r_\eps(s',x)\int_0^{t-s'} (t-s'-s)^{\alpha-1}\psi\big(\e^{-2/\alpha}s\big)\,dsds'\\
 &=\frac{1}{\Gamma(\al)}\int_0^t\r_\eps(s',x)\int_0^{\eps^{-2/\alpha}(t-s')} (t-s'-\eps^{2/\alpha}s)^{\alpha-1}\psi(s)\,dsds'\\
 &=\frac{1}{\Gamma(\al)}\int_0^t\r_\eps(s',x)\int_0^{\eps^{-2/\alpha}(t-s')} (\eps^{2/\alpha})^{\alpha-1} (\eps^{-2/\alpha}(t-s')-s)^{\alpha-1}\psi(s)\,dsds'\\
 &=\eps^{2-2/\alpha}\int_0^t\r_\eps(s',x)\,Y_\alpha*\psi(\e^{-2/\alpha}(t-s'))ds'.
\end{align*}
This gives
\begin{align*}
\Big(R_\eps^j- \frac{2dD_\al}{\sigma^2} Y_\alpha*\rho_\eps^j\Big)(t,x) &= \eps^{2-2/\alpha}\int_0^t \r_\eps(s,x) \left(1-\psi(\eps^{-2/\alpha}(t-s))-\frac{2dD_\al}{\sigma^2}\,Y_\alpha*\psi (\eps^{-2/\alpha}(t-s))\right)ds\\
&= \eps^{2}\int_0^{\eps^{-2/\alpha}t} \r_\eps(\eps^{2/\alpha}s,x) \left(1-\psi(\eps^{-2/\alpha}t-s)-\frac{2dD_\al}{\sigma^2}\,Y_\alpha*\psi (\eps^{-2/\alpha}t-s)\right)ds,
\end{align*}
which is exactly~\eqref{eq:jumpers}.
Now we use that~\eqref{eq:Ynupsi} with $\nu=\al$ gives
\[Y_\alpha*\psi(t) = \Psi \Gamma(1-\alpha) + O(t^{-\delta}).\]
Recalling that $D_\al=\frac{\sigma^2}{2d\Psi\Gamma(1-\al)}$, we deduce the existence of $C>0$ such that
\begin{equation}\label{eq:Odelta}
\Big|1-\frac{2dD_\al}{\sigma^2}\,Y_\al*\psi(t)\Big|\leq CY_{1-\delta}(t).
\end{equation}
We now define the function $n_\e$ by
\[n_\e(t,a)=\int_{\R^d}r_\e(\e^{2/\al}t,a,x)\,dx.\]
Since
\[\n_\ep(t)=n_\e(t,0)=\int_{\R^d}\r_\e(\e^{2/\al}t,x)\,dx,\]
one obtains by integration of~\eqref{eq:jumpers} in $x$ and using~\eqref{eq:Odelta} that
\begin{align}\label{eq:Rj-estim}
    \int_{\R^d}\Big|R_\epsilon^j- \frac{2dD_\al}{\Gamma(\al)} Y_\alpha*\rho_\eps^j\Big|(t,x)\,dx
    & \leq \eps^2 \, \n_\e*\Big|1-\psi-\frac{2dD_\al}{\Gamma(\al)} Y_\alpha*\psi\Big|(\eps^{-2/\alpha}t) \nonumber\\
    & \leq \eps^2\, \n_\e*\psi(\eps^{-2/\alpha}t)+\e^2\,\n_\e*\Big|1-\frac{2dD_\al}{\Gamma(\al)} Y_\alpha*\psi\Big|(\eps^{-2/\alpha}t) \nonumber\\
    &\leq \eps^2\, \n_\e*\psi(\eps^{-2/\alpha}t)+C\e^2\,\n_\e*Y_{1-\delta}(\eps^{-2/\alpha}t).
\end{align}
Because the function $n_\e$ verifies Equation~\eqref{eq:age_density} with initial condition
\[n_\e^0(a) = \int_{\R^d}r_\e^0(a,x)\,dx,\]
which enjoys the convergence
\[\int_0^\infty n_\e^0(a) \,da = \int_{\R^d} \rho_\e^0(x)\,dx\xrightarrow[\e\to0]{}\int_{\R^d}\rho^0(x)\,dx,\]
one can use Lemma~\ref{lem:renewal} and Corollary~\ref{cor:convol} to infer from~\eqref{eq:Rj-estim} that
\[\int_{\R^d}\Big|R_\epsilon^j- \frac{2dD_\al}{\Gamma(\al)} Y_\alpha*\rho_\eps^j\Big|(t,x)\,dx=O(\e^2)+O(\e^{2\delta/\al})\]
locally uniformly in $t\in[0,\infty)$.
\end{proof}

We now turn to the non-jump part.

\begin{lemma}\label{lem:nonjump}
Assume that~\eqref{as:beta} holds.
Then for all $T>0$
\[\sup_{0\leq t\leq T}\int_{\R^d}\Big|R_\e^{nj}- \frac{2dD_\al}{\sigma^2} Y_\alpha*\rho_\e^{nj}\Big|(t,x)\,dx=O(\e^2)\qquad\text{as}\ \e\to0.\]
\end{lemma}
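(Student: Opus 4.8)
The plan is to reduce everything to an estimate on the initial datum $r_\e^0$ that is uniform in the rescaled time $\theta:=\e^{-2/\al}t$. First I would compute both $R_\e^{nj}$ and $Y_\al*\rho_\e^{nj}$ explicitly by exchanging the order of integration and substituting $s\mapsto\e^{-2/\al}s$ in the time variable. Using $\phi=-\psi'$, the time integral in $R_\e^{nj}$ yields $\e^{2/\al}\big(\psi(a)-\psi(a+\theta)\big)$, while the same substitution turns the kernel $(t-s)^{\al-1}$ in the convolution into $\e^{2(\al-1)/\al}(\theta-u)^{\al-1}$; the powers of $\e$ combine to $\e^2$ in both cases. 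Writing $\psi_a(u):=\psi(a+u)$, this gives the two clean expressions
\[
R_\e^{nj}(t,x)=\e^2\int_0^\infty\frac{\psi(a)-\psi(a+\theta)}{\psi(a)}\,r_\e^0(a,x)\,da,
\]
\[
Y_\al*\rho_\e^{nj}(t,x)=\e^2\int_0^\infty\frac{(Y_\al*\psi_a)(\theta)}{\psi(a)}\,r_\e^0(a,x)\,da,
\]
so that, recalling $\frac{2dD_\al}{\sigma^2}=\frac{1}{\Psi\Gamma(1-\al)}$, the quantity to estimate is $\e^2\int_0^\infty\frac{r_\e^0(a,x)}{\psi(a)}B(a,\theta)\,da$ with $B(a,\theta):=\psi(a)-\psi(a+\theta)-\frac{1}{\Psi\Gamma(1-\al)}(Y_\al*\psi_a)(\theta)$.

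The heart of the argument is then to prove a bound $|B(a,\theta)|\leq C\,\psi(a)(1+a)^\al$ that is uniform in $\theta\geq0$. Once this is in hand, since $r_\e^0\geq0$ one can pull the absolute value inside the integral and conclude directly from Assumption~\eqref{as:r0} that the $L^1_x$-norm of the difference is at most $\e^2 C\int_{\R^d}\int_0^\infty r_\e^0(a,x)(1+a)^\al\,da\,dx\leq CM\e^2$, uniformly in $t$ (indeed for all $t\geq0$, not merely on $[0,T]$). By the triangle inequality it suffices to control the two contributions to $B$ separately. The first, $\psi(a)-\psi(a+\theta)$, is handled trivially by $\psi(a)\leq\psi(a)(1+a)^\al$ since $\psi$ is non-increasing.

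The main obstacle is the convolution term $(Y_\al*\psi_a)(\theta)$: the naive bound $\psi(a+u)\leq\psi(a)$ only yields $(Y_\al*\psi_a)(\theta)\leq\psi(a)\,Y_{1+\al}(\theta)$, which diverges as $\theta\to\infty$ and is therefore useless for a uniform estimate. One must instead exploit the precise powerlaw tail of $\psi$ rather than mere monotonicity. Here I would invoke the ratio estimate of Remark~\ref{rk:lem:renewal}, namely $\psi(a+u)/\psi(a)\leq C\,(1+a)^\al(1+a+u)^{-\al}$, to get
\[
\frac{(Y_\al*\psi_a)(\theta)}{\psi(a)}\leq\frac{C(1+a)^\al}{\Gamma(\al)}\int_0^\theta(\theta-u)^{\al-1}(1+a+u)^{-\al}\,du.
\]
The decisive point is that, after the rescaling $u=\theta v$ together with the crude bound $(1+a+\theta v)^{-\al}\leq(\theta v)^{-\al}$, the factors of $\theta$ cancel and the remaining integral collapses to the Beta integral $\int_0^1(1-v)^{\al-1}v^{-\al}\,dv=\Gamma(\al)\Gamma(1-\al)$ of~\eqref{eq:loiBeta}, which is finite and independent of both $\theta$ and $a$. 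This produces the uniform bound $(Y_\al*\psi_a)(\theta)\leq C\,\Gamma(1-\al)\,\psi(a)(1+a)^\al$, completing the estimate of $B$ and hence the lemma. The delicate point worth double-checking is precisely that this constant be independent of $a$ as well as $\theta$, since that is what makes the final integration against $r_\e^0(a,x)(1+a)^\al$ and the appeal to~\eqref{as:r0} legitimate.
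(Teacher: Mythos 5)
Your proposal is correct and takes essentially the same route as the paper: the identical explicit computation (using $\phi=-\psi'$ and the substitution $s\mapsto\e^{-2/\al}s$) reducing the difference to $\e^2\int_0^\infty r_\e^0(a,x)\,B(a,\theta)/\psi(a)\,da$ with $\theta=\e^{-2/\al}t$, followed by a bound on the integrand by $C(1+a)^\al$ uniformly in $\theta$, and the conclusion via Assumption~\eqref{as:r0}. The only difference is cosmetic and concerns the convolution term: the paper uses $\psi(a+\cdot)\le\psi$, then $\psi\le C\,Y_{1-\al}$ together with $Y_\al*Y_{1-\al}=1$ (Lemma~\ref{lem:Yalpha}), and finally $1/\psi(a)\le C(1+a)^\al$, whereas you bundle these same ingredients into the ratio bound of Remark~\ref{rk:lem:renewal} and an explicit Beta integral, which is the identity $Y_\al*Y_{1-\al}=1$ in disguise.
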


\begin{proof}
We start by computing
\begin{align*}
\Big(R_\e^{nj}- \frac{2D_\al}{\sigma^2} Y_\alpha*\rho_\e^{nj}\Big)(t,x) & = \epsilon^{2-2/\al}\int_0^t \int_{0}^\infty \frac{\phi(a+\epsilon^{-2/\al}s)}{\psi(a)} r_\epsilon^0(a,x)dads\\
&\qquad\qquad -\int_0^t\frac{(t-s)^{\al-1}}{\Gamma(\alpha)}\int_0^\infty \frac{\psi(a+\e^{-2/\al}s)}{\psi(a)}r_\e^0(a,x)da ds\\
&= \epsilon^{2}\int_{0}^\infty \frac{\psi(a)-\psi(a+\e^{-2/\al}t)}{\psi(a)} r_\epsilon^0(a,x)da\\
&\qquad\qquad -\e^2\int_0^{\e^{-2/\al}t}\frac{(\e^{-2/\al}t-s)^{\al-1}}{\Gamma(\al)}\int_0^\infty \frac{\psi(a+s)}{\psi(a)}r_\e^0(a,x)da ds
\\
&=\epsilon^2
\int_0^\infty r_\e^0(a,x)\frac{\psi(a)-\psi(a+\e^{-2/\alpha} t)-(Y_\al*\psi(a+\cdot)) (\e^{-2/\al}t)}{\psi(a)}da.
\end{align*}
To estimate this integral, we use that $\psi$ is a non-increasing function to get
\[0\leq\frac{\psi(a)-\psi(a+\e^{-2/\alpha} t)}{\psi(a)}\leq1\]
and
\[(Y_\al*\psi(a+\cdot)) (\e^{-2/\al}t)\leq Y_\al*\psi (\e^{-2/\al}t)\leq C\, Y_\al*Y_{1-\al}(\e^{-2/\al}t) = C,\]
where we have used Assumption~\eqref{as:beta} to ensure the existence of $C>0$ such $\psi\leq CY_{1-\al}$ in the second inequality, and Lemma~\ref{lem:Yalpha} for the last equality.
The result then follows from the fact $r^0_\e$ verifies the condition~\eqref{as:r0} and the fact that $(1+a)^\al\psi(a)$ is bounded on $[0,\infty)$, again as a consequence of~\eqref{as:beta}.
\end{proof}

As a consequence of these lemmas, one obtains a quantified version of~\eqref{eq:conv2}.

\begin{corollary}\label{cor:conv2}
Under Assumptions~\eqref{as:w} and~\eqref{as:beta}, we have for all $\varphi\in C^2_c(\R^d)$ and all $T>0$
\[\sup_{0\leq t\leq T}\Big|\langle\rho_\e(t,\cdot)-\rho_\e^0,\varphi\rangle-\frac{2dD_\al}{\sigma^2}\langle Y_\al*\rho_\e(t,\cdot),\check\Delta^\e\varphi\rangle\Big| = O(\e^2)+O(\e^{2\delta/\al})\qquad\text{as}\ \e\to0.\]
\end{corollary}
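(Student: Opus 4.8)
The plan is to collapse the whole expression into a single duality bracket and then invoke Lemmas~\ref{lem:jump} and~\ref{lem:nonjump}. First I would combine Lemma~\ref{lem:rho-rho0} with the duality identity~\eqref{eq:duality} to rewrite the primal bracket as
\[
\langle\rho_\e(t,\cdot)-\rho_\e^0,\varphi\rangle=\langle\Delta^\e R_\e(t,\cdot),\varphi\rangle=\langle R_\e(t,\cdot),\check\Delta^\e\varphi\rangle,
\]
so that the quantity under the supremum becomes a single bracket,
\[
\langle\rho_\e-\rho_\e^0,\varphi\rangle-\frac{2dD_\al}{\sigma^2}\langle Y_\al*\rho_\e,\check\Delta^\e\varphi\rangle=\Big\langle R_\e-\frac{2dD_\al}{\sigma^2}\,Y_\al*\rho_\e,\ \check\Delta^\e\varphi\Big\rangle.
\]

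Next I would apply the crude duality bound $|\langle h,\check\Delta^\e\varphi\rangle|\leq\|\check\Delta^\e\varphi\|_\infty\,\|h\|_{L^1(\R^d)}$. The factor $\|\check\Delta^\e\varphi\|_\infty$ is bounded uniformly in $\e\in(0,1)$: indeed the uniform convergence~\eqref{eq:Deltaconv} towards the bounded limit $\frac{\sigma^2}{2d}\Delta\varphi$ already forces eventual uniform boundedness, and equivalently a second-order Taylor expansion of $\varphi\in C^2_c(\R^d)$, using the vanishing first moment of $w$ in~\eqref{as:w} to kill the first-order term, yields the explicit bound $\|\check\Delta^\e\varphi\|_\infty\leq\frac{\sigma^2}{2}\|D^2\varphi\|_\infty=:C_\varphi$.

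It then remains to control the $L^1$ norm of the integrand. Here I would split via the jumper/non-jumper decompositions~\eqref{eq:split_rho} and~\eqref{eq:split_R} and use the triangle inequality,
\[
\Big\|R_\e-\tfrac{2dD_\al}{\sigma^2}Y_\al*\rho_\e\Big\|_{L^1}\leq\Big\|R_\e^j-\tfrac{2dD_\al}{\sigma^2}Y_\al*\rho_\e^j\Big\|_{L^1}+\Big\|R_\e^{nj}-\tfrac{2dD_\al}{\sigma^2}Y_\al*\rho_\e^{nj}\Big\|_{L^1}.
\]
Lemma~\ref{lem:jump} bounds the first summand by $O(\e^2)+O(\e^{2\delta/\al})$ uniformly on $[0,T]$, and Lemma~\ref{lem:nonjump} bounds the second by $O(\e^2)$. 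Multiplying by the uniform constant $C_\varphi$ and taking the supremum over $t\in[0,T]$ gives the announced rate $O(\e^2)+O(\e^{2\delta/\al})$.

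I do not expect a genuine obstacle here: the substantive estimates were already carried out in Lemmas~\ref{lem:jump} and~\ref{lem:nonjump}, and this statement is essentially their assembly through duality. The only point needing a line of justification is the uniform-in-$\e$ boundedness of $\check\Delta^\e\varphi$, which is immediate from~\eqref{eq:Deltaconv} (or from the Taylor estimate above), so that the constant $C_\varphi$ does not degenerate as $\e\to0$.
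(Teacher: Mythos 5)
Your proposal is correct and follows essentially the same route as the paper: rewrite $\rho_\e-\rho_\e^0=\Delta^\e R_\e$ via Lemma~\ref{lem:rho-rho0}, pass to the dual operator through~\eqref{eq:duality}, bound by $\|\check\Delta^\e\varphi\|_\infty\,\|R_\e-\tfrac{2dD_\al}{\sigma^2}Y_\al*\rho_\e\|_{L^1}$, and conclude with the splittings~\eqref{eq:split_rho}, \eqref{eq:split_R} together with Lemmas~\ref{lem:jump} and~\ref{lem:nonjump}. Your extra remark on the uniform-in-$\e$ bound for $\check\Delta^\e\varphi$ (via~\eqref{eq:Deltaconv} or the second-order Taylor argument using $\int zw(dz)=0$) is exactly the justification the paper leaves implicit, so there is nothing to add.
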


\begin{proof}
Using Lemma~\ref{lem:rho-rho0} one has for all $t\geq0$
\begin{align*}
\Big|\langle\rho_\e(t,\cdot)-\rho_\e^0,\varphi\rangle-\frac{2dD_\al}{\sigma^2}\langle Y_\al*\rho_\e(t,\cdot),\check\Delta^\e\varphi\rangle\Big| & = \Big|\langle\Delta^\e R_\e(t,\cdot),\varphi\rangle-\frac{2dD_\al}{\sigma^2}\langle Y_\al*\rho_\e(t,\cdot),\check\Delta^\e\varphi\rangle\Big| \\
&\leq \big\|\check\Delta^\e\varphi\big\|_\infty\int_{\R^d}\Big|R_\e(t,x)- \frac{2dD_\al}{\sigma^2} Y_\alpha*\rho_\e(t,x)\Big|\,dx.
\end{align*}
The conclusion then follows from~\eqref{eq:Deltaconv} and Lemmas~\ref{lem:jump} and~\ref{lem:nonjump}, by using the splittings~\eqref{eq:split_rho} and~\eqref{eq:split_R}.
\end{proof}

As announced, the second step is to prove a time equicontinuity result on the family $\{\rho_\e,\ 0<\e<1\}$.

\begin{lemma}\label{lem:equicontinuity}
Assume~\eqref{as:w} and~\eqref{as:beta}.
Then for all $\varphi\in C^2_c(\R^d)$ there exists a function $\varpi\in C([0,\infty)^2,[0,\infty))$ satisfying $\varpi(t,t)=0$ for any $t\geq0$ and: for all $T>0$ there is a constant $C>0$ such that
\[|\langle\rho_\e(t,\cdot),\varphi\rangle-\langle\rho_\e(t',\cdot),\varphi\rangle|\leq\varpi(t,t')+C\big(\epsilon^{2}+\e^{2\delta/\al}\big)\]
 for all $t,t'\in[0,T]$ and all $\e\in(0,1)$.
\end{lemma}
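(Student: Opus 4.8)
The plan is to feed the quantified approximation of Corollary~\ref{cor:conv2} into a direct estimate of the time-modulus of continuity of the fractional integral $Y_\al*\rho_\e$. Writing the bound of Corollary~\ref{cor:conv2} at two times $t,t'\in[0,T]$ and subtracting, the contribution of the initial datum $\rho_\e^0$ cancels and one is left with
\[
\langle\rho_\e(t,\cdot),\varphi\rangle-\langle\rho_\e(t',\cdot),\varphi\rangle=\frac{2dD_\al}{\sigma^2}\big\langle Y_\al*\rho_\e(t,\cdot)-Y_\al*\rho_\e(t',\cdot),\check\Delta^\e\varphi\big\rangle+O(\e^2+\e^{2\delta/\al}),
\]
the $O$ being uniform on $[0,T]$. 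Since $\check\Delta^\e\varphi\to\frac{\sigma^2}{2d}\Delta\varphi$ uniformly by~\eqref{eq:Deltaconv}, the family $(\check\Delta^\e\varphi)_{0<\e<1}$ is bounded in $L^\infty$, say by $\kappa_\varphi:=\sup_{0<\e<1}\|\check\Delta^\e\varphi\|_\infty<\infty$, so the problem reduces to bounding $\int_{\R^d}|Y_\al*\rho_\e(t,x)-Y_\al*\rho_\e(t',x)|\,dx$ by a quantity of the form $\varpi(t,t')$ that vanishes on the diagonal, uniformly in $\e$.

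First I would record the two inputs that make this work: positivity and uniform mass control. Integrating~\eqref{eq:jump-renewal-eps} in age and space and using $\int w_\e=1$ shows, exactly as for~\eqref{eq:rhomom0}, that $\int_{\R^d}\rho_\e(s,x)\,dx=\int_{\R^d}\rho_\e^0(x)\,dx$ for all $s\geq0$; and by~\eqref{as:r0} (together with $(1+a)^\al\geq1$) this common value is $\leq M$, uniformly in $\e$. Moreover $\rho_\e(s,\cdot)\geq0$ because $r_\e^0\geq0$ is propagated along the characteristics.

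The core is then the kernel estimate. Assuming $t'\leq t$ without loss of generality and writing $Y_\al*\rho_\e(t,x)=\int_0^\infty Y_\al(t-s)\1_{s<t}\,\rho_\e(s,x)\,ds$, positivity of $\rho_\e$ and the mass bound give
\[
\int_{\R^d}\big|Y_\al*\rho_\e(t,x)-Y_\al*\rho_\e(t',x)\big|\,dx\leq M\int_0^\infty\big|Y_\al(t-s)\1_{s<t}-Y_\al(t'-s)\1_{s<t'}\big|\,ds.
\]
Because $Y_\al$ is decreasing (as $\al<1$), the integrand equals $Y_\al(t'-s)-Y_\al(t-s)$ on $(0,t')$ and $Y_\al(t-s)$ on $(t',t)$; integrating each piece against the primitive $\int_0^\tau Y_\al=Y_{1+\al}(\tau)$ yields the explicit value $Y_{1+\al}(t')-Y_{1+\al}(t)+2\,Y_{1+\al}(t-t')$. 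Since $s\mapsto s^\al$ is nondecreasing one has $Y_{1+\al}(t')-Y_{1+\al}(t)\leq0$, while subadditivity of $s\mapsto s^\al$ keeps the whole expression nonnegative, so this quantity lies between $\frac{(t-t')^\al}{\Gamma(1+\al)}$ and the clean Hölder bound $\frac{2}{\Gamma(1+\al)}(t-t')^\al$.

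Putting these together, one may take
\[
\varpi(t,t')=\frac{4dD_\al M\kappa_\varphi}{\sigma^2\,\Gamma(1+\al)}\,|t-t'|^\al,
\]
which is continuous on $[0,\infty)^2$, satisfies $\varpi(t,t)=0$, and is independent of both $\e$ and $T$; the remaining $O(\e^2+\e^{2\delta/\al})$ from Corollary~\ref{cor:conv2} (two copies, at $t$ and $t'$) is absorbed into the constant $C$, which is allowed to depend on $T$ and $\varphi$. The only mildly delicate point is the bookkeeping of the absolute value in the kernel difference together with the sign information furnished by the monotonicity and subadditivity of $s\mapsto s^\al$; the rest is a direct combination of mass conservation and Corollary~\ref{cor:conv2}.
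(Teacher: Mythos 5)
Your proposal is correct and follows essentially the same route as the paper: reduce via Corollary~\ref{cor:conv2} to the equicontinuity of the term $\frac{2dD_\al}{\sigma^2}\langle Y_\al*\rho_\e,\check\Delta^\e\varphi\rangle$, then combine mass conservation with the same splitting of the kernel difference over $(0,t')$ and $(t',t)$ to obtain a H\"older-$\al$ modulus uniform in $\e$. The only cosmetic differences are that you place the absolute value inside the space integral (using positivity of $\rho_\e$ and the bound $M$ from~\eqref{as:r0}) and simplify the modulus to $C|t-t'|^\al$, whereas the paper bounds the scalar function $s\mapsto\langle\rho_\e(s,\cdot),\check\Delta^\e\varphi\rangle$ by its sup norm and keeps the modulus in the form $\frac{C}{\al\Gamma(\al)}\big(2|t-t'|^\al+t^\al-t'^\al\big)$; both are valid and equivalent in substance.
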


\begin{proof}
Due to Corollary~\ref{cor:conv2}, we only have to prove that
\[|\langle Y_\al*\rho_\e(t,\cdot),\check\Delta^\e\varphi\rangle-\langle Y_\al*\rho_\e(t',\cdot),\check\Delta^\e\varphi\rangle|\leq \omega(t,t')\]
for all $t,t'\in[0,T]$ and $\e\in(0,1)$.
From~\eqref{eq:rhomom1} we have that for any $t\geq0$
\[|\langle\rho_\e(t,\cdot),\check\Delta^\e\varphi\rangle|\leq{\|\rho_\e(t,\cdot)\|}_{L^1}{\|\check\Delta^\e\varphi\|}_\infty={\|\rho_\e^0\|}_{L^1}{\|\check\Delta^\e\varphi\|}_\infty\]
and we deduce that for $t'\geq t\geq0$
\begin{align*}
|Y_\al*\langle\rho_\e,\check\Delta^\e\varphi\rangle(t)-Y_\al*&\langle\rho_\e,\check\Delta^\e\varphi\rangle(t')|=\frac{1}{\Gamma(\al)}\bigg|\int_t^{t'}(t'-s)^{\al-1}\langle\rho_\e,\check\Delta^\e\varphi\rangle(s)\, ds\\
&\hskip44mm-\int_0^t\big((t-s)^{\al-1}-(t'-s)^{\al-1}\big)\langle\rho_\e,\check\Delta^\e\varphi\rangle(s)\, ds\bigg| \\
&\leq \frac{{\|\rho_\e^0\|}_{L^1}{\|\check\Delta^\e\varphi\|}_\infty}{\Gamma(\al)}\bigg(\int_t^{t'}(t'-s)^{\al-1} ds+\int_0^t\big((t-s)^{\al-1}-(t'-s)^{\al-1}\big) ds\bigg) \\
&\leq \frac{{\|\rho_\e^0\|}_{L^1}{\|\check\Delta^\e\varphi\|}_\infty}{\al\Gamma(\al)} \big(2(t'-t)^\al+t^\al-t'^\al\big).
\end{align*}
The result follows from the fact that
\[{\|\rho_\e^0\|}_{L^1}\to{\|\rho^0\|}_{L^1}\quad\text{and}\quad{\|\check\Delta^\e\varphi\|}_\infty\to{\|\check\Delta\varphi\|}_\infty \qquad\text{as}\ \e\to0.\]
\end{proof}

\begin{corollary}\label{cor:equicont}
For any $\varphi\in C_c^2(\R^d)$, there exists a constant $C$ depending only on \(\alpha,\delta,\|\beta\|_\infty,\|\rho^0\|_1,\|\varphi\|_{W^{2,\infty}}\) such that 
\[
\left|\langle \rho_\ep(t,.),\varphi\rangle -\langle \rho_\ep(t',.),\varphi\rangle  \right|\leq C\left(2|t-t'|^\al +|t^\al-t'^\al|+|t-t'|^{\al}+|t-t'|^{\f{\delta}{1-\al+\delta}}\right)
\]
\end{corollary}
\begin{proof}
From \eqref{eq:rho_integ}, we have for $t,t'$,
\[
\rho_\ep(t,x)-\rho(t',x)=\ep^{2-\f{2}{\al}}\int_{t'}^t \int_0^a \beta(a) \Delta^\ep r_\ep(s,a,x) da ds.
\]
This leads to, for $\varphi\in C^2_c$, 
\begin{align*}
    \langle \rho_\ep(t,.),\varphi\rangle -\langle \rho_\ep(t',.),\varphi\rangle &= \ep^{2-\f{2}{\al}}\int_{t'}^t \int_0^\infty \beta(a)  r_\ep(s,a,x) \check\Delta^\ep\varphi (x) da ds.
\end{align*}
Leading to (suboptimal estimate) 
\begin{align*}
   \left| \langle \rho_\ep(t,.),\varphi\rangle -\langle \rho_\ep(t,.),\varphi\rangle \right| &\leq  \ep^{2-\f{2}{\al}}|t-t'| \|\beta\|_\infty \|r_\ep\|_{L^1(dadx)}\| \check\Delta^\ep\varphi\|_\infty \\
   &= \ep^{2-\f{2}{\al}}|t-t'| \|\beta\|_\infty \|\rho_\ep^0\|_{1}\| \check\Delta^\ep\varphi\|_\infty
\end{align*}
Combining everything together, we obtain set of two majorations that we can merge into 
\begin{align*}
   \left| \langle \rho_\ep(t,.),\varphi\rangle -\langle \rho_\ep(t,.),\varphi\rangle \right|\leq C\min\left( 2|t-t'|^\al +|t^\al-t'^\al|+(\ep^2+\ep^{\f{2\delta}{\al}}),\ep^{2-\f{2}{\al}}|t-t'|\right)
   \end{align*}
   where $C$ depends on $\beta,\|\rho_1^0\|, \|\varphi\|_{W^{2,\infty}},\al$ but neither on $\ep$ or $t,t'$. This leads to \begin{align*}
   \left| \langle \rho_\ep(t,.),\varphi\rangle -\langle \rho_\ep(t,.),\varphi\rangle \right|\leq& C\left( 2|t-t'|^\al +|t^\al-t'^\al|\right)\\
   &+C\min\left(\ep^2,\ep^{2-\f{2}{\al}}|t-t'|\right)+C\min\left(\ep^{\f{2\delta}{\al}},\ep^{2-\f{2}{\al}}|t-t'|\right)
   \end{align*}
   Using the elementary inequality 
   \[
   \forall a,b>0, \forall \gamma \in [0,1],\quad \min(a,b)\leq a^\gamma b^{1-\gamma}
   \]
   and choosing the values of $\gamma=1-\al, \frac{1-\al}{\delta+1-\al}$ to get rid of $\ep$, we get to 
   \begin{equation}
      \left| \langle \rho_\ep(t,.),\varphi\rangle -\langle \rho_\ep(t,.),\varphi\rangle \right|
 \leq C\left(2|t-t'|^\al +|t^\al-t'^\al|+|t-t'|^{\al}+|t-t'|^{\f{\delta}{1-\al+\delta}}\right) \label{eq:equicont}
   \end{equation}
 where again the constant $C$ depend on $\varphi,\beta,\|\rho^0\|_1,\al$ but not on $\ep$.
\end{proof}

We are now in position to prove our main theorem.

\begin{proof}[Proof of Theorem~\ref{th:main}]
Because of Corollary~\ref{cor:equicont}, one can invoke Arzelà-Ascoli Theorem to infer, for any $T>0$ and any $\varphi\in C^2_c(\R^d)$, the existence of a continuous fonction $\ell_\varphi:[0,T]\to\R$ and a sequence $(\e_n)$ decreasing to zero such that

\begin{equation}\label{eq:conv_lphi}
\sup_{0\leq t\leq T}\big|\langle\rho_{\e_n}(t,\cdot),\varphi\rangle-\ell_\varphi(t)\big|\to0.
\end{equation}
Let us pick up a sequence ${(\varphi_k)}_{k\in\N}$ of $C^2_c(\R^d)$ functions which is dense in $C_0(\R^d)$.
By a diagonal argument, we can extract a new subsequence, still denoted by $(\e_n)$, such that
\eqref{eq:conv_lphi} holds for any $\varphi\in\{\varphi_k,\,k\in\N\}$.
Now, for any fixed $t\in[0,T]$, we define a linear form on $\{\varphi_k,\,k\in\N\}$ by
\[L_t(\varphi_k):=\ell_{\varphi_k}(t).\]
Using~\eqref{eq:rhomom0} and~\eqref{as:r0}, we have for all $k\in\N$
\[|L_t(\varphi_k)|\leq \Big(\limsup_{n\to\infty}\int_{\R^d}\rho_{\e_n}(t,x)dx\Big){\|\varphi_k\|}_\infty
=\Big(\limsup_{n\to\infty}\int_{\R^d}\rho^0_{\e_n}(x)dx\Big){\|\varphi_k\|}_\infty\leq M{\|\varphi_k\|}_\infty.\]
The linear form $L_t$ is thus continuous and we can extend it into a unique continuous linear form with norm less than $M$, still denoted by $L_t$, on $C_0(\R^d)$.
Riesz's representation theorem then provides us with the existence of a measure $\rho(t)\in\mathcal M(\R^d)$ such that
\[\forall\varphi\in C_0(\R^d),\qquad L_t(\varphi)=\langle\rho(t),\varphi\rangle.\]
We now verify that $\rho$ is the weak$*$ limit of the sequence $(\rho_{\e_n})$, uniformly in time on $[0,T]$.
For any $\varphi\in C_0(\R^d)$ and any $k\in\N$ we have
\begin{align*}
\sup_{0\leq t\leq T}\big|\langle\rho_{\e_n}(t,\cdot),\varphi\rangle-\langle\rho(t),\varphi\rangle\big|&
\leq \sup_{0\leq t\leq T}\big|\langle\rho_{\e_n}(t,\cdot),\varphi_k\rangle-\langle\rho(t),\varphi_k\rangle\big|\\
&\qquad+\sup_{0\leq t\leq T}\big|\langle\rho_{\e_n}(t,\cdot),\varphi_k-\varphi\rangle\big|
+\sup_{0\leq t\leq T}\big|\langle\rho(t),\varphi_k-\varphi\rangle\big|\\
&\leq \sup_{0\leq t\leq T}\big|\langle\rho_{\e_n}(t,\cdot),\varphi_k\rangle-\langle\rho(t),\varphi_k\rangle\big|
+2M{\|\varphi_k-\varphi\|}_\infty.
\end{align*}
Since $\langle\rho(t),\varphi_k\rangle=L_t(\varphi_k)=\ell_{\varphi_k}(t)$, the convergence~\eqref{eq:conv_lphi}, which is valid for $\varphi_k$, ensures that
\[\limsup_{n\to\infty}\sup_{0\leq t\leq T}\big|\langle\rho_{\e_n}(t,\cdot),\varphi\rangle-\langle\rho(t),\varphi\rangle\big| \leq 2M{\|\varphi_k-\varphi\|}_\infty.\]
The density of $(\varphi_k)$ in $C_0(\R^d)$ guaranteeing that $\inf_k{\|\varphi_k-\varphi\|}_\infty=0$, we obtain that
\[\sup_{0\leq t\leq T}\big|\langle\rho_{\e_n}(t,\cdot),\varphi\rangle-\langle\rho(t),\varphi\rangle\big|\to0\]
which, recalling that we have endowed the space $\mathcal M(\R^d)$ with its weak$*$ topology, exactly means that
\begin{equation}\label{eq:conv_rho}
\rho_{\e_n}\to \rho\qquad\text{in}\ C([0,T],\mathcal M(\R^d)).
\end{equation}
Finally, considering a sequence of times $T\in\N$, we infer, again by a diagonal argument, the existence of a subsequence, still denoted by $(\e_n)$, and the existence of $\rho\in C([0,\infty),\mathcal M(\R^d))$ such that the convergence~\eqref{eq:conv_rho} holds for all $T>0$.
Since Corollary~\ref{cor:conv2} guarantees that~\eqref{eq:conv1} holds for any $\varphi\in C^2_c(\R^d)$,
we get by passing to the limit that $\rho$ must verify Definition~\ref{def:sol} of a solution to Equation~\eqref{eq:frac-subdiff}.
\end{proof}

\section*{Acknowledgement}
The authors acknowledge the support of the French Agency for Research, ANR (grant ANR-20-CE45-0023, ABC4M) to H.B. and N.Q. 


\end{document}